\documentclass[12pt]{amsart}

\usepackage[colorlinks = true,
            linkcolor = red,
            urlcolor  = blue,
            citecolor = blue,
            anchorcolor = blue]{hyperref}

\setlength{\topmargin}{0in} \setlength{\oddsidemargin}{0in}
\setlength{\evensidemargin}{0in} \setlength{\textwidth}{6.4in}
\setlength{\textheight}{8.6in}


\numberwithin{equation}{section}

%

\setlength{\itemsep}{0in}

\newcommand{\ben}{\begin{enumerate}}
\newcommand{\een}{\end{enumerate}}

\newcommand{\bea}{\begin{eqnarray}}
\newcommand{\ba}{\begin{array}}
\newcommand{\bean}{\begin{eqnarray*}}
\newcommand{\ea}{\end{array}}
\newcommand{\eea}{\end{eqnarray}}
\newcommand{\eean}{\end{eqnarray*}}
\newcommand{\beq}{\begin{equation}}
\newcommand{\eeq}{\end{equation}}
\newcommand{\bthm}{\begin{thm}}
\newcommand{\ethm}{\end{thm}}
\newcommand{\blem}{\begin{lem}}
\newcommand{\elem}{\end{lem}}
\newcommand{\bprop}{\begin{prop}}
\newcommand{\eprop}{\end{prop}}
\newcommand{\bcor}{\begin{cor}}
\newcommand{\ecor}{\end{cor}}
\newcommand{\bdfn}{\begin{dfn}}
\newcommand{\edfn}{\end{dfn}}
\newcommand{\brem}{\begin{rem}}
\newcommand{\erem}{\end{rem}}
\newcommand{\bpf}{\begin{proof}}
\newcommand{\epf}{\end{proof}}
\newcommand{\bfact}{\begin{fact}}
\newcommand{\efact}{\end{fact}}
\newcommand{\bobs}{\begin{obs}}
\newcommand{\eobs}{\end{obs}}

\alph{enumii} \roman{enumiii}

\newtheorem{thm}{Theorem}[section]
\newtheorem{prop}[thm]{Proposition}
\newtheorem{lem}[thm]{Lemma}

\newtheorem{cor}[thm]{Corollary}
\newtheorem{dfn}[thm]{Definition}
\newtheorem{rem}[thm]{Remark}
\newtheorem{fact}[thm]{Fact}
\newtheorem{obs}[thm]{Observation}


\def\N{{\mathbb N}}                \def\Z{{\mathbb Z}}      \def\R{{\mathbb R}}
\def\C{{\mathbb C}}                  
\def\Q{{\mathbb Q}}
\def\1{1\!\!1}
\def\and{\text{ and }}

        \def\diam{\text{\rm {diam}}}

      \def\lra{\longrightarrow}

\def\h{\rm{h}}
\def\hmu{\h_\mu}

\def\Int{\text{{\rm Int}}}
         \def\P{\text{{\rm P}}}

                   \def\Pa{{\mathcal P}}

\def\a{\alpha}                \def\b{\beta}             \def\d{\delta}
               \def\vp{\varepsilon}          
                           \def\l{\lambda}
              \def\om{\omega}           \def\Om{\Omega}
               \def\sg{\sigma}

\def\bi{\bigcap}              
\def\({\bigl(}                \def\){\bigr)}
\def\lt{\left}                \def\rt{\right}

\def\ld{\ldots}                        \def\^{\tilde}

\def\es{\emptyset}            \def\sms{\setminus}

      \def\imp{\Rightarrow}

\def\ov{\overline}

\def\om{\omega}



\newcommand{\Ups}{\Upsilon}

\def\Q{\mathbb Q}

\begin{document}

\title[]
{ \bf\large {Invariant measures in  non-conformal fibered systems with singularities}}
\date{\today}

\author[\sc Eugen MIHAILESCU]{\sc Eugen Mihailescu}

\address{Eugen Mihailescu,
Institute of Mathematics of the Romanian Academy,
P.O Box 1-764,
RO 014700, Bucharest, Romania}
\email{Eugen.Mihailescu@imar.ro    \ \hspace*{0.5cm}
Web: www.imar.ro/$\sim$mihailes}

\date{}
\thanks{Research  supported in part by grant PN-III-P4-ID-PCE-2020-2693 from UEFISCDI}
\maketitle
\begin{abstract}
 
We study invariant measures and thermodynamic formalism for a class of endomorphisms $F_{T}$ which are only piecewise differentiable on countably many pieces and non-conformal. 
The endomorphism  $F_{T}$ has parametrized countably generated limit sets $J_{T, \om}$ in stable fibers. We prove  a Global Volume Lemma for $F_T$ implying that the projections of equilibrium measures  are exact dimensional on a non-compact global basic set $J_T $. A dimension formula for these global measures is obtained by using the  Lyapunov exponents and marginal entropies.  Then, we study the equilibrium measures of geometric potentials $\psi_{T, s}$, and we prove that the dimensions of the associated measures $\nu^\om_s$ in fibers are independent of $\om$ and they depend real-analytically on the parameter $s$ from an interval $\mathcal F(T)$. Moreover, we establish a Variational Principle for dimension in fibers. 
\end{abstract}

\

\textbf{MSC 2010:} 28D05, 37D35, 37A35, 37C45, 37A44, 46G10, 60A10.

\textbf{Keywords:} Pressure functional;  equilibrium measures; entropy; pointwise dimension;  Lyapunov exponents of measures; marginal entropies of measures; Variational Principles. 

\section{Introduction and Outline.}

In this paper we  explore a large class of invariant measures for a class $\mathcal E$ of endomorphisms with singularities  $F_T: (1, \infty)^2 \times Y\to (1, \infty)^2 \times Y$ which are \textbf{piecewise differentiable} on countably many pieces, non-uniformly hyperbolic and non-conformal, where $Y$ is the closure of a bounded open set in $\mathbb C$, and $T$ is a type of skew-product endomorphism.  The fact that we have countably many pieces of differentiability for $F_T$ makes this case very different from the case of only finitely many such pieces. Moreover the non-conformality of $F_T$ and the presence of parabolic points present additional difficulties.  The thermodynamic formalism of the transformations  $F_T$ presents interesting features and has connections to ergodic number theory; we introduce several  new ideas and methods for their study.

The first coordinate map of $F_T$ takes values in $ (1, \infty) \times (1, \infty)$, it is non-conformal and only piecewise differentiable on countably many open sets, and has parabolic points.   The second coordinate map of $F_T$ consists of  parametrized conformal contractions, and we obtain a family of countably generated \textbf{fiber  limit sets} $J_{T, \omega}$ in the stable fibers, where the parameter $\omega$ belongs to  a 1-sided shift  space $\Sigma_I^+$ with countable alphabet $I$. 
 We prove first that the projections of the conditional measures of equilibrium states from the 2-sided shift space $\Sigma_I$, are exact dimensional on the fibers $J_{T, \omega}$ and we find their dimensions.

Then, we study the subtle metric properties for the global invariant measures $\nu$ on the  \textbf{non-compact global basic sets} $J_T\subset (1, \infty)^2 \times Y \subset \mathbb C^2$, formed by taking the  union of  fiber limit sets of type $\{z\}\times J_{T, \om}$, for $z = \tilde \pi(\om)$, and $\tilde \pi: \Sigma_I^+ \to (1, \infty) \times (1, \infty)$ is a coding map and $I = \N^* \times \N^*$. We show that the measures $\nu$ are \textbf{exact dimensional} on these non-compact global basic sets $J_T \subset (1, \infty)^2 \times Y$, thus proving a type of  Eckmann-Ruelle Conjecture for them (see \cite{ER}). 
Moreover, we find the general formula for the Hausdorff dimension of $\nu$ (and thus for its pointwise and box dimensions), using the  Lyapunov exponents and marginal entropies.

In particular, we investigate the case of equilibrium measures for geometric potentials $\psi_{T, s}$. We show that the dimensions of their associated fiber measures depends real-analytically on the parameter $s$ from an interval $\mathcal F(T)$, and we establish a Variational Principle for Hausdorff dimension on the fractal set $J_{T, \om}$.

For some examples of $T$ our method generates representations as certain multi-dimensional continued fractions for the points in the limit sets. 

\

Exact dimensionality is an important property for a measure, as shown for eg in \cite{BPS}, \cite{LY}, \cite{Pe}, \cite{Y}.  However, our case here is different from the  papers above, due to the lack of differentiability everywhere, and to the fact that we work on a non-compact manifold with measures that are not compactly supported.

In general, the ergodic theory of  endomorphisms and dimensions of invariant sets and measures were studied in many cases, for eg in \cite{ABN},  \cite{BPS}, \cite{FM},  \cite{LY}, \cite{Ma1}, \cite{MM},  \cite{M-MZ}, \cite{M-stable}, \cite{MS},  \cite{Pa}, \cite{Pe}, \cite{Ro}, \cite{Ru82}, \cite{Ru}, 
  \cite{Y}, to mention several contributions in this direction.   
 In \cite{Pa} Parry studied endomorphisms from an ergodic points of view, including  Jacobians for invariant measures.
In \cite{Ma1} Manning showed that for an Axiom A diffeomorphism of a surface preserving an ergodic measure $\mu$, the entropy $h(\mu)$ is equal to the product of the positive Lyapunov exponent of $\mu$ and the dimension of the set of generic points in an unstable manifold.
In \cite{Ru82} Ruelle expressed the Hausdorff dimension of an invariant repeller as the zero of a pressure function. In \cite{Y} Young found a formula  for the pointwise dimension of a hyperbolic measure $\mu$ (i.e $\mu$ has only non-zero Lyapunov
exponents) invariant to a smooth diffeomorphism of a surface; this proves also the exact dimensionality of  $\mu$. 
In \cite{LY} Ledrappier and Young proved a formula for the entropy of an invariant measure $\mu$ for a diffeomorphism of a compact Riemannian manifold, using the Lyapunov exponents and the dimensions of $\mu$ in the directions of the hyperbolic subspaces.  Then in \cite{Pe} Pesin studied the Carath\'eodory-Pesin structures  and their applications in dimension theory. 
  For smooth diffeomorphisms on compact manifolds without boundary, Barreira, Pesin and Schmeling solved in \cite{BPS} the Eckmann-Ruelle Conjecture (\cite{ER}); they proved a type of local product structure for invariant hyperbolic measures $\mu$ and computed the dimension of $\mu$ as the sum of the stable and the unstable pointwise
dimensions.  
Dimensions for finite non-conformal function systems were studied for eg by Falconer in \cite{Fa1} and, in the case of hyperbolic measures on compact manifolds by Ledrappier and Young \cite{LY}.  In \cite{FH} Feng and Hu proved the exact dimensionality of self-conformal measures for finite iterated function systems with  overlaps, and Feng extended  to finite self-affine systems in \cite{F}. In \cite{A} Allaart studied self-affine functions and the multifractal
formalism for self-similar measures for function systems satisfying the open set condition.  
And in \cite{M-stable} were studied dimensions for conditional measures on stable manifolds for a class of hyperbolic endomorphisms. 

The dimension theory for countable iterated function systems and their invariant measures presents significant differences from the case of finite systems; it had contributions  by Mauldin and Urba\'nski in \cite{MaU-ifs}, \cite{gdms}, by Mihailescu and Urba\'nski in \cite{MU-Adv}, \cite{MU-ETDS}, and by other authors. 
Also, various types of multi-dimensional continued fractions and relations with ergodic theory were investigated for example in \cite{Sch} and the references therein. 

\

In the current paper, we study a new class $\mathcal E$ of endomorphisms $F_T$, which are only piecewise non-uniformly hyperbolic  on countably many pieces in non-compact manifolds and non-conformal, and which have a different kind of dynamics. For instance our measures $\nu$ on the basic set $J_T$ of $F_T$ are not compactly supported. Our setting is thus  different from the case of hyperbolic diffeomorphisms on compact manifolds studied in \cite{BPS}, \cite{LY}, \cite{Y}. 

\

We define the \textbf{endomorphisms} $F_T$  in the following way:

\

 Consider first  a conformal Smale skew-product endomorphism $T: \Sigma_I^+ \times Y \to \Sigma_I^+ \times Y, T(\om, y) = (\sigma(\om), T_\om(y))$, as in \cite{MU-ETDS} (see Definition \ref{confSmale}) with alphabet $E = I = \N^*\times \N^*$, where $Y$ is the closure of a bounded open set in $\C$. In fibers we have conformal injective contractions $T_\om: Y \to Y, \    \om \in \Sigma_I^+.$ Some examples of such endomorphisms $T$ are given in the next Section.
 
Next, consider  the representation in continued fractions for irrational coordinates of points in $(1, \infty) \times (1, \infty)$,  $\tilde \pi: \Sigma_I^+ \to (1, \infty) \times (1, \infty)$,
\begin{equation}\label{pitilde}
 \tilde \pi(\omega) = \tilde \pi\big(((m_0, n_0), (m_1, n_1), \ldots)\big) := \pi_1(\omega) + i \pi_2(\omega), \ \omega \in \Sigma_I^+,
 \end{equation}
 where $$\pi_1(\omega) := m_0 + \frac{1}{m_1 + \frac{1}{m_2+\ldots}}, \  \text{and} \ \  \pi_2(\omega) := n_0+ \frac{1}{n_1+\frac{1}{n_2+\ldots}}.$$
 
Then, given the conformal Smale endomorphism $T$ and the representation $\tilde \pi$ as above, the new endomorphism $F_T$ is defined on the open set $(1, \infty) \times (1, \infty) \times Y \ \subset \C^2$, by $$F_T: (1, \infty) \times (1, \infty) \times Y \lra (1, \infty) \times (1, \infty) \times Y,$$ 
\begin{equation}\label{generalF}
F_T(z, w) = \begin{cases} \Big(\frac{1}{\{Re(z)\}} +  \frac{i}{\{Im(z)\}}, \ T_\om(w) \Big),  &\text{if}  \ Re(z) \notin \Q,   Im (z) \notin \Q,  \text{and} \ z = \tilde \pi(\om),\\
\\
(2, 2, \frac 12) \hspace{1.3in}, &\text{if} \ Re (z) \in \mathbb Q \ \text{or} \ Im (z) \in \mathbb Q,
\end{cases} 
\end{equation}
where the fractional part of a positive number $x$ is $\{x\} := x - [x]$.
Thus $F_T$ has singularities at all points $(z, w) \in \C^2$ with $Re(z) \in \Q$ or $Im(z) \in \Q$.

Then, $\mathcal E$ denotes the \textbf{collection} of endomorphisms $F_T$ for all conformal Smale skew-products $T$ (namely Definition \ref{E}).

\

The thermodynamic formalism of the maps $F_T$ is very different from the one for Smale endomorphisms $T$. This is due to the countably many domains of differentiability and the non-conformality and  non-uniform hyperbolicity of $F_T$. We introduce in the sequel some new methods and ideas for their study, which are different from those in \cite{MU-ETDS}.

\

\ Our \textbf{main results} in the sequel are: 

\

In \textbf{Section 2}, we recall some notions and results about Smale skew-products $T$  and introduce the  class $\mathcal E$ consisting of countably piecewise hyperbolic endomorphisms $F_T$. 
In Theorem \ref{exactT} we prove the exact dimensionality of the projections of conditional measures on the fibers $J_{T, \om}$ for general endomorphisms $F_T$.

\

In \textbf{Sections 3 and 4} we give the \textbf{main new results} and proofs of the paper. 
\newline
In Theorem \ref{modexT} we study the \textbf{general case} of a skew-product endomorphism $F_T$  associated to a conformal Smale skew-product $T$.
If the complex coordinates of the map $F_T$ are written as $F_T = (F_{T, 1}, F_{T, 2})$, then $F_{T, 1}$ is a piecewise differentiable and piecewise hyperbolic map  on countably many pieces and non-conformal on these pieces.
We prove a Global Volume Lemma for the projection measures $\nu$.  This is  used then to show that the global invariant  measures $\nu$ are \textbf{exact dimensional} on  the non-compact \textbf{global basic set} $J_T \subset \C^2$.  Moreover, in Theorem \ref{modexT} we find the \textbf{general formula} for the Hausdorff dimension of the global measures $\nu$, which involves the Lyapunov exponents and marginal entropies. 

The proof of  Theorem \ref{modexT}  introduces several new ideas and methods.  We  deal with the fact that the first complex coordinate map $F_{T, 1}$ is only piecewise differentiable on countably many pieces and it is not conformal on these pieces. Moreover, the measure $\nu$ can have non-equal Lyapunov exponents along the  axes  in the $z$-plane, and is not compactly supported.  The second coordinate map $F_{T, 2}$ is conformal and the iterates coming from countably many $n$-preimages generate parametrized limit sets $J_{T, \om}$ in the $w$-fibers. All these facts require a proof with several steps of the global Volume Lemma. 

\

Then in \textbf{Section 4}, we give a class of examples by using the equilibrium measures of \textbf{geometric potentials} $\psi_{T, s}$. In Theorem \ref{geoms} we prove that the dimensions $\delta_{T, s}$ of the associated projection  measures in fibers, depend \textbf{real-analytically} on the parameter $s$ in an interval which depends on $T$. 

In Theorem \ref{indepcx} we prove that the dimension of the set $J_{T, \om}$ is independent of $\om \in \Sigma_I^+$. And then in Theorem \ref{varprin} we establish a \textbf{Variational Principle for dimension} on $J_{T, \om}$. 

\

\section{Background notions and results.}

Let us now recall several notions that will be used in the sequel. 
Firstly, the notion of \textit{pointwise dimension} for a measure, and that of \textit{exact dimensional measure} (for eg \cite{Pe}). 
For exact dimensional measures,  the pointwise, Hausdorff, and box dimensions (see for eg \cite{Fa} for definitions) will all coincide (\cite{Y}). 

\bdfn\label{EDim}
Let a probability measure $\mu$ on a metric space $X$, and for a point $x \in X$ define the upper pointwise dimension, respectively the lower pointwise dimension of $\mu$ at $x$ by  $$\overline\delta(\mu)(x) = \mathop{\limsup}\limits_{r\to 0} \frac{\log \mu(B(x, r))}{\log r},\ \text{and} \ \underline\delta(\mu)(x) = \mathop{\liminf}\limits_{r\to 0} \frac{\log \mu(B(x, r))}{\log r}.$$
If $\overline\delta(\mu)(x) = \underline\delta(\mu)(x)$, then we say that the pointwise dimension of $\mu$ exists at $x$, and we denote it by $\delta(\mu)(x)$. 
A measure $\mu$ is called exact dimensional on  $X$ if the pointwise dimension of $\mu$ exists for $\mu$-a.e $x\in X$ and $\delta(\mu)(\cdot)$ is constant $\mu$-a.e., denoted by $\delta(\mu)$.
\edfn

\

Another notion is that of Smale skew-product endomorphism. Below are briefly recalled some notions/results from \cite{MU-ETDS}. Let us mention that in this case there exist several significant differences from the case of shifts over finite alphabets. For example, the topological entropy of the shift over a countable infinite alphabet is infinite. 

We recall now some results from thermodynamic formalism of 2-sided shifts $(\Sigma_E, \sigma)$ on countable alphabets $E$.
For $\b>0$ the metric $d_\b$
on $E^\Z$ is
$$
d_\b\((\om_n)_{-\infty}^{+\infty},(\tau_n)_{-\infty}^{+\infty}\)
=\exp\(-\b\max\{n\ge 0:\forall_{k\in\Z}|k|\le n\, \imp\, \om_k=\tau_k\}\)
$$
with  $e^{-\infty}=0$. All
metrics $d_\b$, $\b>0$, on $E^\Z$ induce the product topology on $E^\Z$. We set
$$
\Sigma_E = \big\{(\om_n)_{-\infty}^{+\infty}, \om_n \in E, \forall n\in\Z \}
$$
H\"older continuity is defined similarly as before, for potentials $\psi: \Sigma_E \to \R$.
For any $\om\in \Sigma_E$ and $-\infty\le m\le n\le+\infty$, define 
the truncation between the $m$ and $n$ positions as $
\om|_m^n =\om_m \om_{m+1}\ld\om_n.
$
If
$\tau\in \Sigma_E$, let the cylinder from $m$ to $n$ positions,
$
[\tau]_m^n=\{\om\in \Sigma_E, \om|_m^n=\tau|_m^n\}
$.
The family of cylinders from $m$ to $n$ is
denoted by $\mathcal C_m^n$. 

Let $\psi: \Sigma_E \to \R$ be a continuous function. Then the \textit{topological pressure}
$\P(\psi)$ is,
\beq\label{1_2015_11_04}
\P(\psi)
:=\lim_{n\to\infty}\frac1n\log\sum_{[\om]\in \mathcal C_0^{n-1}} \exp\(\sup\(S_n\psi|_{[\om]}\)\),
\eeq
where the limit above exists by subadditivity. 
 A shift-invariant
Borel probability $\mu$ on $\Sigma_E$ is called a \textit{Gibbs measure}
of $\psi$ if there are constants $C\ge 1$, $P\in\R$
such that
\beq\label{1082305}
C^{-1}
\le {\mu([\om|_0^{n-1}]) \over \exp(S_n\psi(\om)-Pn)}
\le C
\eeq
for all $n\ge 1, \om\in \Sigma_E$.  From (\ref{1082305}), it follows that if $\psi$ has a Gibbs
state, then automatically $P=\P(\psi)$.  As before, a function $\psi:\Sigma_E \to\R$ is called \textit{summable} if:
$$
\sum_{e\in E}\exp\(\sup\(\psi|_{[e]}\)\)<\infty.
$$

\bthm\label{eqmesi}
A H\"older continuous $\psi$ is summable if and only if
$P(\psi) < \infty$. For every H\"older continuous summable potential $\psi:
\Sigma_E \to\R$ 
there exists a unique Gibbs state $\mu_\psi$ on $\Sigma_E$, and 
$\mu_\psi$ is ergodic. 

Also the following Variational Principle for pressure holds,
$$
P(\psi) = \sup\lt\{\hmu(\sg)+\int_{\Sigma_E} \psi d\mu, \text{for}  \ \mu \ \sigma-\text{invariant probability on} \ \Sigma_E, \int_{\Sigma_E} \psi d\mu>-\infty\rt\},
$$
and $\mu_\psi$ is the only measure at which this supremum is attained.
\ethm


Consider now the partition of the space  $\Sigma_E$ with the infinite cylinders determined by the non-negative indices,
$$
\Pa_-=\{[\eta|_0^{\infty}]:\eta \in \Sigma_E\}
     =\{[\om]:\om\in \Sigma_E^+\}.
$$
 $\Pa_-$ is a measurable partition of $\Sigma_E$. If $\mu$
is a Borel probability on $\Sigma_E$, let the Rokhlin canonical system of conditional measures associated to  $\Pa_-$ (see \cite{Pa}, \cite{Ro}), denoted by
$
\{\ov\mu^\tau:\tau\in \Sigma_E\}
$.
Then $\ov\mu^\tau$ is a probability
measure on the cylinder $[\tau|_0^{\infty}]$ and we denote by $\ov\mu^\om$, $\om\in \Sigma_E^+$,  the 
conditional measure on $[\om]$. The truncation to non-negative indices is:
$$
\pi_0: \Sigma_E \to \Sigma_E^+, \  \pi_0(\tau) = \tau|_0^\infty, \tau \in \Sigma_E,
$$
The system $\{\ov\mu^\om, \om\in \Sigma_E^+\}$ of conditional measures is uniquely determined up to measure zero by the property (see \cite{Ro}) that, for all $g\in L^1(\mu)$, 
$$
\int_{\Sigma_E}g\ d\mu=\int_{\Sigma_E^+}\int_{[\om]}g\ d\ov\mu^{\om}
  \ d(\mu\circ \pi_0^{-1})(\om).
$$
 
 \
 
Define  now in our case the \textit{Smale conformal skew-product endomorphisms}.

\begin{dfn}\label{confSmale} (\cite{MU-ETDS})
Let $Y$ a closed bounded set in $ \C$, $E$ a countable alphabet, and 
assume:
\begin{itemize}

\item[(a)] The interior of $Y$ is nonempty, and let for each $\om \in \Sigma_E^+$, $T_\om: Y \to Y$ be a continuous injective map. Denote the map  $T:\Sigma_I^+ \times Y \to \Sigma_I^+ \times Y, T(\om, y) = (\sigma(\om), T_\om(y))$. 

\item[(b)] Each map $T_\om:Y \to Y$ extends to a $C^1$ conformal embedding from $Y^*$ to $Y^*$, where $Y^*$ is a bounded connected open subset of $\C$ containing $Y$. Then $T_\om$ denotes also this extension and assume that the maps
$
T_\om:Y^*\to Y^*$ satisfy:
\item[(c)] There exists $\l>1$ such 
that  for all $\om\in \Sigma_E^+$ and all $y_1,y_2\in Y^*$, 
\beq\label{1111205}
d(T_\om(y_2),T_\om(y_1))\le \l^{-1}d(y_2,y_1)
\eeq
\item[(d)] (Bounded Distortion Property 1) There are constants $\a > 0, H>0$ s.t $\forall y, z\in Y^*$, \ 
$$
\big|\log|T_\om'(y)|-\log|T_\om'(z)|\big|\le H||y-z||^\a.
$$
\item[(e)] The function 
$
\Sigma_E\ni\tau\longmapsto\log|T_\tau'(\hat\pi_2(\tau))|\in\R
$
is H\"older continuous.
\item[(f)] (Open Set Condition) For every $\om\in \Sigma_E^+$ and for all $a, b\in E$ with  $a\ne b$, we have 
$
T_{a\om}(\Int(Y))\cap T_{b\om}(\Int(Y))=\es.
$
\item[(g)] (Strong Open Set Condition) There exists a measurable function $\d:\Sigma_E^+\to(0,\infty)$, so that for every $\om \in \Sigma_E^+$, \ 
$
J_{T, \om}\cap\(Y\sms\ov B(Y^c, \d(\om)\)\ne\es.
$
\end{itemize}
Then $T$ is called a Smale conformal skew-product endomorphism. 
 
 \end{dfn}
 
 \
 
 Below are some \textbf{Examples} of Smale conformal skew-product endomorphisms:
 
 \
 
\  \textbf{1)} Let $Y= \overline{B(1/2, 1/2)} \subset \C$ and for every $\om \in \Sigma_I^+$ define $$T_\om(z) = \frac{1}{\bar z+\tilde \pi(\om)}, z \in Y.$$ If $z \in Y$, then its complex conjugate $\bar z$ is also in $Y$, and we use the beautiful fact that the image of the set $\{z \in \C, Re(z) \ge 1\}$ through the map $z \to \frac 1z$ is equal to $Y$. Then $T_\om$ is a well-defined injective conformal map. From the fact that $\tilde\pi(\om) = m_0+\frac{1}{m_1+\frac{1}{\ldots}} + i(n_0 + \frac{1}{n_1+\frac{1}{\ldots}}), \ \om = ((m_0, n_0), (m_1, n_1), \ldots) \in \Sigma_I^+$,  it follows that if  $a, b \in I = \N^*\times \N^*, a \ne b$ then $|\bar z - \bar z'| < |\tilde\pi(a\om) - \tilde\pi(b\om)|$ for any $z, z' \in B(1/2, 1/2)$, hence $$T_{a\om}(B(1/2, 1/2)) \cap T_{b\om}(B(1/2, 1/2)) = \emptyset.$$ 
 
 Due to the expression of the representation $\tilde\pi:\Sigma_I^+ \to (1, \infty)^2$ and since for any $z\in Y$, $$1/4 \le |T_\om'(z)| \le 4, \forall \om \in \Sigma_I^+,$$ we see that the Bounded Distortion Property above is satisfied.  Also the maps $T_\om$ are uniformly contracting on $Y$ for $\om \in \Sigma_I^+$.  Moreover, the map $\tau \to \hat\pi_2(\tau) $ is H\"older continuous on $\Sigma_I$, since $\hat\pi_2(\tau) = T_{\tau|_{-1}^\infty} \circ T_{\tau|_{-2}^\infty} \circ \ldots$, for any $\tau \in \Sigma_I$. Similarly it follows that  all the other conditions in Definition \ref{confSmale} are satisfied for the endomorphism $T$ defined by the maps $T_\om, \ \om \in \Sigma_I^+$. 
 
 
\  \textbf{2)} Let $Y = \overline{B(1/2, 1/2)}$ and for every $\om \in \Sigma_I^+$, define $$T_\om(z) = \frac{1}{z^2 + 2 \tilde\pi(\om)}, z \in Y.$$ Then since $Re(z^2) \ge -\frac 14, z \in Y$, and since the map  $z \to z^2$ is injective on $Y$, it follows as above that $T_\om: Y \to Y$ is well-defined, holomorphic and an injective contraction on $Y$. In this case $|z^2| \le \frac 14, z \in Y$, thus if $a, b \in I = \N^* \times \N^*$ and $a \ne b$, then it follows that $$T_{a\om}(B(1/2, 1/2)) \cap T_{b\om}(B(1/2, 1/2)) = \emptyset.$$ Also from the expression of $T_\om$, we obtain that there exist constants $C_1, C_2 \in (0, 1) $ such that $C_1 \le |T_\om'(z)| \le C_2, z \in Y, \om \in \Sigma_I^+$, and that the Bounded Distortion Property is satisfied. Moreover all the other conditions in Definition \ref{confSmale} are satisfied, so the endomorphism $T$ defined by $T_\om, \om \in \Sigma_I^+$ is a Smale conformal skew-product.

 \  \textbf{3)} Let $Y = \overline{B(0, 1)}$ and $f: Y \to Y$ be an injective  conformal contraction. Let $E$ be a countable alphabet, and a sequence of small radii $r_e \in (0, 1/3)$ and a sequence of points $z_e \in B(0, 1)$. Define $T_e(y) = r_e f(y) + z_e$, for $e \in I$. Assume that $r_e, z_e, e \in E$ are chosen such that the images of the maps $T_e$ are mutually disjoint and have accumulation points also in the interior of $Y$. For $\om \in \Sigma_I^+$, let $T_\om = T_{\om_0}.$
 Then the conditions in Definition \ref{confSmale} are satisfied for $T$. 
 
 
 \ \textbf{4)} Other examples of Smale skew-product endomorphisms can be found in \cite{MU-ETDS}.
 
 $\hfill\square$
 
 \

  Now, in general for an arbitrary Smale skew-product endomorphism $T$ and for an arbitrary sequence $\tau\in \Sigma_E$, denote the following composition of maps by,
$$
T_\tau^n
:=T_{\tau|_{-n}^{\infty}}^n
:=T_{\tau|_{-1}^{\infty}}\circ T_{\tau|_{-2}^{\infty}}\circ\ld
  \circ T_{\tau|_{-n}^{\infty}}:Y \to Y.
$$
Then  the sets $\(T_\tau^n\(Y)\)_{n=0}
^\infty$ form a  descending sequence, and 
$
\diam\(\ov{T_\tau^n(Y)})\le \l^{-n}\diam(Y).
$
But $(Y,d)$ is complete, so $
\mathop{\bi}\limits_{n=1}^\infty \ov{T_\tau^n (Y)}
$
is a point denoted by $\hat\pi_2(\tau)$. This defines the map
\begin{equation}\label{pi2hat}
\hat\pi_2:\Sigma_E\lra Y
\end{equation}
Define also the map $\hat\pi: \Sigma_E\to \Sigma_E^+\times Y$ by 
\beq\label{5111705p141}
\hat\pi(\tau)=\(\tau|_0^{\infty},\hat\pi_2(\tau)\),
\eeq
and the truncation to non-negative indices by 
$$\pi_0: \Sigma_E \lra \Sigma_E^+, \  \  \pi_0(\tau) = \tau|_0^\infty.$$
Now for arbitrary $\om\in \Sigma_E^+$ denote the $\hat \pi_2$-projection in $Y$ of the cylinder $[\om]\subset \Sigma_E$, by
\begin{equation}\label{fiberJ}
J_{T, \om}:=\hat\pi_2([\om]),
\end{equation}
and call these fractal sets the \textit{stable fibers} of  $T$.


\begin{dfn}\label{E}
Denote by $\mathcal {E}$ the collection of endomorphisms $F_T$, defined by (\ref{generalF}) for all Smale conformal skew-product endomorphisms $T$ from Definition \ref{confSmale}.
\end{dfn}

\

If $F_T$ is associated to a conformal Smale skew-product $T$, then for every $\om \in \Sigma_I^+$, let $J_{T, \om}$ be the associated \textbf{fiber limit set} of $F_T$ given by (\ref{fiberJ}). Namely,
\begin{equation}\label{JT}
J_{T, \omega}:= \big\{  \mathop{\bigcap}\limits_{n\ge 1} \overline{T_{\tau|_n^\infty}(Y)}, \  \tau \in [\omega] \big\}.
 \end{equation}
 
 \
 
The \textbf{global basic set} $J_T$ of $F_T$ is defined by, 
\begin{equation}\label{globalbs}
J_T:= \mathop{\bigcup}\limits_{\om \in \Sigma_I^+} \{\tilde \pi(\om)\}\times J_{T, \om} = \{(z, w), z \in (1, \infty)^2, w \in J_\om, \text{if} \ z = \tilde \pi(\om), \om \in \Sigma_I^+\}  \subset \C^2.
\end{equation}
 This fractal global set $J_T$ is non-compact.
 
 \
 
 \textbf{Remark.}
 
 For Example 1) above, with $T_\om(z) = \frac{1}{\bar z+\tilde\pi(\om)}, z \in Y, \om \in \Sigma_I^+$,  the points in the global basic set $J_T\subset \C^2$ are represented as a new type of  multi-dimensional continued fractions, namely
$$
(z, w) = (\tilde\pi(\om), T_{\eta|_{-1}^\infty} \circ T_{\eta|_{-2}^\infty} \circ \ldots ) 
 =\Big(m_0 + \frac{1}{m_1 + \frac{1}{\ldots}} + i(n_0+\frac{1}{n_1+\frac{1}{\ldots}}), \ T_{\eta|_{-1}^\infty} \circ T_{\eta|_{-2}^\infty} \circ \ldots  \Big),
$$
where $\eta \in \Sigma_I, \eta|_0^\infty = \om$.
 
 Also for Example 2) above, with $T_\om(z) = \frac{1}{z^2+2\tilde\pi(\om)}$ we obtain another type of multi-dimensional continued fractions.
 
 $\hfill\square$

The exact dimensionality of the projections of conditional measures on the fibers of $F_T\in\mathcal E$ follows from \cite{MU-ETDS} and gives the following:

\begin{thm}\label{exactT}
Consider an  endomorphism $F_T$ associated to an arbitrary conformal Smale skew-product $T$ by formula (\ref{generalF}). Let $\psi:\Sigma_I \to \mathbb R$ be a H\"older continuous summable potential, which has an  equilibrium measure $\mu_\psi$ on $\Sigma_I^+$.  

Then, for $\pi_{0*}\mu_\psi$-a.e $\omega\in \Sigma_I^+$, the projection $\hat\pi_{2*}\bar\mu_\psi^\omega$ of the conditional measure $\bar\mu_\psi^\omega$  of $\mu_\psi$ on $[\om]$, is exact dimensional on the limit set $J_{T, \om} \subset \C$. Moreover,  its dimension satisfies:
$$HD(\hat\pi_{2*}\bar\mu_\psi^\omega) = \frac{h_{\mu_\psi}(\sigma)}{\chi_{\mu_\psi}}.$$
\end{thm}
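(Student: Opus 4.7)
My plan is to reduce the claim to the exact-dimensionality statement for conformal Smale skew-products proved in \cite{MU-ETDS}. The key observation is that the fiber projection $\hat\pi_2$ from (\ref{pi2hat}) and the fiber limit sets $J_{T,\omega}$ from (\ref{JT}) depend only on the contractions $T_\omega$, not on the base map of $F_T$; likewise the Rokhlin conditional $\bar\mu_\psi^\omega$ is intrinsic to the symbolic system $(\Sigma_I,\sigma,\mu_\psi)$. Hence the theorem is truly a fiberwise statement about $T$, and the singularities of $F_T$ play no role in the argument. By Theorem \ref{eqmesi}, the equilibrium state $\mu_\psi$ of a H\"older continuous summable $\psi$ is the unique (ergodic) Gibbs measure for $\psi$, so the hypotheses of the volume-lemma machinery in \cite{MU-ETDS} are in force.

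\textbf{Fiber Volume Lemma.} To prove the dimension formula I would carry out a pointwise dimension computation. For $\mu_\psi$-a.e.\ $\tau\in\Sigma_I$ with $\pi_0(\tau)=\omega$ and small $r>0$, choose the unique integer $n=n(\tau,r)$ with
\begin{equation*}
\diam\bigl(T_\tau^{n+1}(Y)\bigr)\le r<\diam\bigl(T_\tau^{n}(Y)\bigr).
\end{equation*}
The Bounded Distortion Property (d) and conformality of each $T_\omega$ imply, via the chain rule, that $\log\diam(T_\tau^n(Y))$ is the Birkhoff sum of $\log|T_\bullet'(\hat\pi_2(\bullet))|$ along appropriate shifts of $\tau$, up to a bounded error. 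The Open Set and Strong Open Set Conditions (f)--(g) ensure that $B(\hat\pi_2(\tau),r)\cap J_{T,\omega}$ is, in $\hat\pi_{2*}\bar\mu_\psi^\omega$-measure, comparable to the image of a single cylinder of length $\asymp n$ anchored at $\tau$. Inserting the Gibbs bound (\ref{1082305}) then yields
\begin{equation*}
\log\hat\pi_{2*}\bar\mu_\psi^\omega\bigl(B(\hat\pi_2(\tau),r)\bigr)=S_n\psi(\tau)-P(\psi)\,n+O(1).
\end{equation*}

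\textbf{Birkhoff and the variational principle.} Ergodicity of $\mu_\psi$ together with the Birkhoff Ergodic Theorem, applied as $n(\tau,r)\to\infty$, converts both numerator and denominator of $\log\hat\pi_{2*}\bar\mu_\psi^\omega(B(\hat\pi_2(\tau),r))/\log r$ into time averages: the denominator tends to $-\chi_{\mu_\psi}$, while the numerator tends to $\int\psi\,d\mu_\psi-P(\psi)=-h_{\mu_\psi}(\sigma)$ by the variational principle characterizing equilibrium states. Taking the ratio, the pointwise dimension of $\hat\pi_{2*}\bar\mu_\psi^\omega$ exists $\mu_\psi$-a.e.\ and equals $h_{\mu_\psi}(\sigma)/\chi_{\mu_\psi}$, independently of $\tau$. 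This gives both exact dimensionality of $\hat\pi_{2*}\bar\mu_\psi^\omega$ on $J_{T,\omega}$ and the stated Hausdorff dimension.

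\textbf{Main obstacle.} The technical heart of the argument, rigorously carried out in \cite{MU-ETDS}, is the countable-alphabet bookkeeping: summability of $\psi$ is required so that the Gibbs constants remain uniform and the $O(1)$ error in the volume estimate does not degenerate along generic orbits, and the Strong Open Set Condition (g) must be used to prevent distant cylinders with large alphabet symbols from inflating the ball measure beyond the single-cylinder Gibbs estimate. Once that bookkeeping is granted from \cite{MU-ETDS}, no additional difficulty is introduced by the ambient endomorphism $F_T$, since the computation takes place entirely on the fiber system of $T$.
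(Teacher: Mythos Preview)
Your proposal is correct and takes essentially the same approach as the paper: the paper does not give a self-contained proof of this theorem but simply records that it follows from \cite{MU-ETDS}, since the fiber measures $\hat\pi_{2*}\bar\mu_\psi^\omega$ depend only on the Smale skew-product $T$ and the symbolic Gibbs state, not on the base dynamics of $F_T$. Your sketch of the underlying volume-lemma argument (bounded distortion, Gibbs estimate on cylinders, Birkhoff averaging, and the equilibrium identity $h_{\mu_\psi}=P(\psi)-\int\psi\,d\mu_\psi$) is a faithful outline of what that reference carries out, and is in fact more detailed than what the paper itself provides.
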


\

\section{General global exact dimensionality and dimension formulas.} 

This Section contains some of the main new results and methods of the paper for the non-conformal maps with singularities $F_T \in \mathcal E$. 
For an arbitrary  endomorphism $F_T \in \mathcal E$ (see Definition \ref{E}) we prove that the \textbf{global} projections $\nu$ of equilibrium measures, are exact dimensional, and we find their dimension.
Consider  the spaces $$X = (1, \infty) \times (1, \infty) \subset \C,  \ \text{and} \ \ Y  \subset \mathbb C,$$ and recall that for $x\in \mathbb R$, $\{x\}:= x - [x]$. Given $T$, let the skew-product  $F_T: X \times Y \to X \times Y$, 
\begin{equation}\label{modcx}
F_T(z, w) = \Big(\frac{1}{\{Re(z)\}} +  \frac{i}{\{Im(z)\}}, \ T_\om(w) \Big), \ z \in X,  Re(z)\notin \Q, Im(z) \notin \Q, \ z = \tilde \pi(\om),  \ w \in Y, 
\end{equation}
and $$F(z, w) = (2, 2, \frac 12), \ \text{if} \ Re(z) \in \Q \  \text{or} \ Im(z) \in \Q.$$ 

Using $\tilde \pi$ from (\ref{pitilde}), define the coding map \ $\tilde \pi_Y: \Sigma_I^+ \times Y \to X \times Y, $
\begin{equation}\label{pitildeY}
\tilde \pi_Y(\omega, y) = (\tilde\pi(\omega), y), \ (\omega, y) \in \Sigma_I^+ \times Y.
\end{equation}
Recall also that we denoted the truncation to non-negative indices by,
$$\pi_0: \Sigma_I \to \Sigma_I^+, \ \pi_0(\eta) = \eta|_0^\infty, \ \eta \in \Sigma_I, $$
and given the conformal Smale skew-product $T$, there exists from (\ref{5111705p141}) a coding map, 
\begin{equation}\label{pihat}
\hat\pi : \Sigma_I \to \Sigma_I^+ \times Y, \ \ \hat \pi(\eta) = (\pi_0(\eta), \hat\pi_2(\eta)), \ \eta \in \Sigma_I.
\end{equation}

Introduce also the projection 
\begin{equation}\label{piclar}
\pi: \Sigma_I \to X \times Y, \ \pi:= \tilde \pi_Y \circ \hat \pi, \ \pi(\eta) = (\tilde \pi(\eta|_0^\infty), \hat\pi_2(\eta)), \eta \in \Sigma_I.
\end{equation}

As $I = \N^*\times \N^*$, denote the canonical projections on ``coordinates'' of points in $\Sigma_I^+$ by, $$p_1: \Sigma_I^+ \to \Sigma_{\mathbb N^*}^+, \ \text{and} \ p_2:\Sigma_I^+ \to \Sigma_{\mathbb N^*}^+,$$ where for any $((m_0, n_0), (m_1, n_1), \ldots) \in \Sigma_I^+$, with $m_i, n_i \in \N^*,  \ i \in \N$, 
\begin{equation}\label{p1}
 p_1((m_0, n_0), (m_1, n_1), \ldots) = (m_0, m_1, \ldots),  \ \  p_2((m_0, n_0), (m_1, n_1), \ldots) = (n_0, n_1, \ldots).
\end{equation}

If $\mu^+$ is a $\sigma$-invariant measure on $\Sigma_I^+$, denote the projection measures of $\mu^+$ on $\Sigma_{\N^*}^+$ by 
\begin{equation}\label{mu1}
\mu_1:= p_{1*} \mu^+, \ \text{and} \ \mu_2:= p_{2*} \mu^+,
\end{equation}
and call $\mu_1, \mu_2$ the \textbf{marginal mesures} of $\mu^+$. The entropies of $\mu_1, \mu_2$ with respect to the shift on $\Sigma_{\N^*}^+$ are called the \textbf{marginal entropies} of $\mu^+$.

\

Since we work with continued fractions, define for every $n \in \N^*$ the contraction map,
\begin{equation}\label{varphi}
\varphi_n(x) = \frac{1}{x+n}, \ x \in [0, 1).
\end{equation}
Denote the coding $\rho_0: \Sigma_{\N^*}^+ \to (0, 1)$, as the representation in continued fractions, 
\begin{equation}\label{ro0}
 \rho_0(\omega) = \frac{1}{\omega_0 + \frac{1}{\omega_1 + \ldots}}, \ \text{for} \ \omega=(\omega_0, \omega_1, \ldots) \in \Sigma_{\N^*}^+.
 \end{equation}
For an arbitrary $\sigma$-invariant measure $\mu$ on $\Sigma_I$, denote by $\mu^+:= \pi_{0*}\mu$ on $\Sigma_I^+$, and let $\mu_1, \mu_2$ be the measures from (\ref{mu1}) associated to $\mu^+$.

Using (\ref{mu1}) denote the \textbf{marginal Lyapunov exponents of $\mu^+$} by:
\begin{equation}\label{lyap}
\chi_1(\mu^+) := -\int_{\Sigma_{\N^*}^+} \log |\varphi'_{\zeta_0}(\rho_0\sigma\zeta)| \ d\mu_1(\zeta).
\end{equation}
$$
\chi_2(\mu^+) := -\int_{\Sigma_{\N^*}^+} \log |\varphi'_{\zeta_0}(\rho_0\sigma\zeta)| \ d\mu_2(\zeta).
$$
If $\mu^+ = \pi_{0*}\mu$, write also $\chi_1(\mu), \chi_2(\mu)$ for $\chi_1(\mu^+), \chi_2(\mu^+)$ respectively.

\

Define the \textbf{Lyapunov exponent of the endomorphism} $T$ with respect to a shift-invariant probability measure $\mu$ on $\Sigma_I$ by, 
\begin{equation}\label{lyapglob}
\chi_T(\mu):= -\int_{\Sigma_I} \log |T'_{\eta|_0^\infty}(\hat\pi_2(\eta))| \ d\mu(\eta).
\end{equation}
If $T$ is fixed, denote also $\chi(\mu)$. Notice that $\chi_1(\mu^+), \chi_2(\mu^+), \chi(\mu)$ are all positive. Also let
\begin{equation}\label{lambda1}
\lambda_1(\mu^+):= \exp(-\chi_1(\mu^+)), \ \lambda_2(\mu^+):= \exp(-\chi_2(\mu^+))
\end{equation}
So $\lambda_1(\mu^+), \lambda_2(\mu^+) \in (0, 1)$. If $\mu^+ = \pi_{0*}\mu$,  write also $\lambda_1(\mu), \lambda_2(\mu)$ for $\lambda_1(\mu^+), \lambda_2(\mu^+)$.

\

If the endomorphism $F_T$ is associated to a conformal Smale skew-product $T$, then for any $\om \in \Sigma_I^+$ the fiber limit set was denoted by $J_{T, \om}$. Let $J_T$ be the \textbf{global basic set} of $F_T$, 
$$
J_T:= \mathop{\bigcup}\limits_{\om \in \Sigma_I^+} \{\tilde \pi(\om)\}\times J_{T, \om} \ \subset \C^2.
$$
 This global fractal basic set $J_T$ is non-compact, since  $X = (1, \infty) \times (1, \infty)$.

Recall that $I = \N^* \times \N^*$, and from (\ref{piclar}), $$\pi: \Sigma_I \longrightarrow (1, \infty)^2 \times Y, \  \ \pi(\eta) = (\tilde \pi(\eta|_0^\infty), \hat\pi_2(\eta)), \eta \in \Sigma_I.$$
Thus if $\eta = (\ldots, (m_{-1}, n_{-1}), (m_0, n_0), (m_1, n_1), \ldots)$ with $m_j, n_j \in \N^*, j \in \Z$, then 
\begin{equation}\label{pieta}
\pi(\eta) = \big(m_0+\frac{1}{m_1+\frac{1}{m_2 + \ldots}} + i (n_0 + \frac{1}{n_1+ \frac{1}{n_2 + \ldots}}), \ T_{\eta|_{-1}^\infty}\circ T_{\eta|_{-2}^\infty}\circ \ldots\big).
\end{equation}

\

  Recall now the notations of Section 2, in particular the $F_T$-invariant measure $\nu_\psi = \pi_*\mu_\psi$ on $J_T$, where $\mu_\psi$ is an equilibrium measure on $\Sigma_I$. 
  
  \
  
  The next Theorem proves the \textbf{global exact dimensionality} of the measure $\nu_\psi$ on the global basic set $J_T$, and gives also the  \textbf{dimension formula} for $HD(\nu_\psi)$, in terms of Lyapunov exponents and marginal entropies.
  
  \
  
\begin{thm}\label{modexT}
Let the endomorphism $F_T: X \times Y \to X \times Y$ be associated to a conformal Smale skew-product $T$ by (\ref{generalF}). 
Let $\psi: \Sigma_I \to \mathbb R$ be a H\"older continuous summable potential with  $\mu_\psi$ its equilibrium measure on $\Sigma_I$, and $\pi$ the coding from (\ref{pieta}), and $\pi_0:\Sigma_I \to \Sigma_I^+$  the canonical truncation to positive coordinates. Denote the measures $$\mu^+_\psi:= (\pi_0)_* \mu_\psi \ \text{on} \ \Sigma_I^+, \ \text{and} \  \nu_\psi:= \pi_* \mu_\psi \ \text{on} \  X \times Y.$$ Let $\mu_{1, \psi}, \mu_{2, \psi}$ be the marginal  measures $\mu_1, \mu_2$ from (\ref{mu1}) associated to $\mu_\psi^+$.  
Then,

a) The projection measure  $\nu_\psi$ is exact dimensional on the global basic set $J_T \subset \C^2$.

b) If $\lambda_1(\mu_\psi) < \lambda_2(\mu_\psi)$,  the  pointwise (and Hausdorff) dimension of $\nu_\psi$ is,
$$\delta(\nu_\psi) = HD(\nu_\psi) = \frac{h_{\mu_\psi} - h_{\mu_{1, \psi}}(1-\frac{\chi_2(\mu_\psi)}{\chi_1(\mu_\psi)})}{\chi_2(\mu_\psi)} + \frac{h_{\mu_\psi}}{\chi(\mu_\psi)}.$$

c) If $\lambda_1(\mu_\psi) \ge \lambda_2(\mu_\psi)$, then 
$$\delta(\nu_\psi) = HD(\nu_\psi) = \frac{h_{\mu_\psi} - h_{\mu_{2, \psi}}(1-\frac{\chi_1(\mu_\psi)}{\chi_2(\mu_\psi)})}{\chi_1(\mu_\psi)} + \frac{h_{\mu_\psi}}{\chi(\mu_\psi)}.$$

\end{thm}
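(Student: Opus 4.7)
The strategy is in three stages: first, a local product decomposition reducing the global estimate to a base volume on $(1,\infty)^2$ and a fibre volume in $Y$; second, a multi-scale Ledrappier--Young style argument establishing a Volume Lemma for the base projection; third, assembly with Theorem~\ref{exactT} to conclude exact dimensionality and read off the stated formula.

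For the product decomposition, I would factor $\pi=\tilde\pi_Y\circ\hat\pi$, apply the Rokhlin disintegration $\mu_\psi=\int\bar\mu_\psi^{\omega}\,d\mu_\psi^+(\omega)$ of $\mu_\psi$ along the fibres of $\pi_0$, and obtain by Fubini, for Borel $A\subset X$ and $B\subset Y$,
\begin{equation*}
\nu_\psi(A\times B)=\int_{\tilde\pi^{-1}(A)}\hat\pi_{2*}\bar\mu_\psi^{\omega}(B)\,d\mu_\psi^+(\omega).
\end{equation*}
Since a Euclidean ball $B((z,w),r)\subset\C^2$ is sandwiched between the product boxes $B(z,r/\sqrt 2)\times B(w,r/\sqrt 2)$ and $B(z,r)\times B(w,r)$, the total dimension will split as the sum of a fibre dimension and a base dimension. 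By Theorem~\ref{exactT} the fibre factor $\hat\pi_{2*}\bar\mu_\psi^{\omega}(B(w,r))$ behaves like $r^{h_{\mu_\psi}/\chi(\mu_\psi)+o(1)}$ for $\mu_\psi^+$-a.e.\ $\omega$ and $\hat\pi_{2*}\bar\mu_\psi^{\omega}$-a.e.\ $w$, so everything reduces to estimating the base factor $\mu_\psi^+(\tilde\pi^{-1}B(\tilde\pi(\omega),r))$.

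The main technical step is a \emph{Volume Lemma for the base projection}: for $\mu_\psi^+$-a.e.\ $\omega$, $\log\mu_\psi^+(\tilde\pi^{-1}B(\tilde\pi(\omega),r))/\log r$ converges to $h_{\mu_{1,\psi}}/\chi_1(\mu_\psi)+(h_{\mu_\psi}-h_{\mu_{1,\psi}})/\chi_2(\mu_\psi)$ in case $\chi_1(\mu_\psi)>\chi_2(\mu_\psi)$ (theorem case (b), i.e.\ $\lambda_1<\lambda_2$), and to the symmetric expression $h_{\mu_{2,\psi}}/\chi_2(\mu_\psi)+(h_{\mu_\psi}-h_{\mu_{2,\psi}})/\chi_1(\mu_\psi)$ in case (c). Assuming the first regime, I would pass to a set of full $\mu_\psi^+$-measure on which the following classical asymptotics hold simultaneously: Shannon--McMillan--Breiman for $\mu_\psi^+$ and each marginal $\mu_{i,\psi}$; Birkhoff for the log-derivative potentials $-\log|\varphi'_{\zeta_0}(\rho_0\sigma\zeta)|$ against each $\mu_{i,\psi}$, so that Bounded Distortion for continued fractions yields the typical cylinder lengths $|\pi_i([p_i(\omega)|_0^{j-1}])|=e^{-j\chi_i(\mu_\psi)+o(j)}$; and the conditional Shannon--McMillan--Breiman statement along the factor map $p_1$, which corresponds to the Abramov--Rokhlin identity $h_{\mu_\psi}=h_{\mu_{1,\psi}}+h_{\mu_\psi}(\sigma\mid p_1)$. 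For small $r$, set $n=\lfloor-\log r/\chi_1(\mu_\psi)\rfloor$ and $k=\lfloor-\log r/\chi_2(\mu_\psi)\rfloor$, so $n\le k$, and consider the mixed-scale rectangles
\begin{equation*}
R_{n,k}(\omega)=\{\omega'\in\Sigma_I^+:\ p_1(\omega')|_0^{n-1}=p_1(\omega)|_0^{n-1},\ p_2(\omega')|_0^{k-1}=p_2(\omega)|_0^{k-1}\}.
\end{equation*}
Bounded Distortion makes $\tilde\pi(R_{n,k}(\omega))$ a curvilinear rectangle with both side lengths comparable to $r$, so $\tilde\pi^{-1}B(\tilde\pi(\omega),r)$ is covered by a uniformly bounded number of such rectangles and contains one with slightly smaller parameters. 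Disintegrating $\mu_\psi^+$ along $p_1$ and combining the Gibbs property with the conditional Shannon--McMillan estimate yields
\begin{equation*}
\mu_\psi^+(R_{n,k}(\omega))\asymp\mu_{1,\psi}([p_1(\omega)|_0^{n-1}])\cdot e^{-k(h_{\mu_\psi}-h_{\mu_{1,\psi}})}\asymp e^{-nh_{\mu_{1,\psi}}-k(h_{\mu_\psi}-h_{\mu_{1,\psi}})},
\end{equation*}
which via $n\chi_1(\mu_\psi)\approx -\log r\approx k\chi_2(\mu_\psi)$ evaluates to $r^{h_{\mu_{1,\psi}}/\chi_1(\mu_\psi)+(h_{\mu_\psi}-h_{\mu_{1,\psi}})/\chi_2(\mu_\psi)+o(1)}$. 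Case (c) is handled symmetrically by interchanging $(p_1,\chi_1)$ with $(p_2,\chi_2)$.

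Assembling the fibre asymptotic with the base Volume Lemma through the Fubini decomposition and the product-box sandwich of Euclidean balls gives, for $\nu_\psi$-a.e.\ $(z,w)$, $\log\nu_\psi(B((z,w),r))/\log r\to\delta_{\mathrm{base}}+h_{\mu_\psi}/\chi(\mu_\psi)$, which is exact dimensionality together with the stated formula; equality with the Hausdorff dimension then follows from \cite{Y}. The main obstacle is uniformity: the conditional Shannon--McMillan--Breiman statement and the Gibbs-based evaluation of $\mu_\psi^+(R_{n,k}(\omega))$ must both hold on a full $\mu_\psi^+$-measure set in the countable-alphabet, non-uniformly hyperbolic regime where parabolic points force arbitrarily large digits and $\nu_\psi$ is not compactly supported, and similarly the fibre asymptotic from Theorem~\ref{exactT} must be transferred uniformly across the slices $\tilde\pi^{-1}B(\tilde\pi(\omega),r)$ in the Fubini integral. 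The summability of $\psi$, the Bounded Distortion Property in Definition~\ref{confSmale}(d) and the Strong Open Set Condition together supply the tail and distortion control that legitimizes these passes to typical-point asymptotics.
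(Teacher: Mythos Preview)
Your route differs substantially from the paper's. You work with mixed-depth rectangles $R_{n,k}(\omega)$ matched to the ball radius in each real direction and aim to estimate their $\mu_\psi^+$-mass via Abramov--Rokhlin and conditional Shannon--McMillan--Breiman; the paper instead covers the ball with \emph{equal}-depth cylinders $[\omega'|_0^{n'-1}]\subset\Sigma_I^+$, whose mass is controlled directly by the Gibbs property, and then \emph{counts} how many are needed by first proving exact dimensionality of the one-dimensional marginal $\Upsilon_1=(\rho_0)_*\mu_{1,\psi}$ (via \cite{MU-Adv}, after inducing past the parabolic fixed point of $\varphi_1$) and reading off the count from the known dimension $\delta_1=h_{\mu_{1,\psi}}/\chi_1$. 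The paper's scheme is a counting argument that reduces the two-dimensional Volume Lemma to the already-established one-dimensional one; yours is a direct Ledrappier--Young attack.

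The genuine gap is the displayed estimate $\mu_\psi^+(R_{n,k}(\omega))\asymp \mu_{1,\psi}([p_1(\omega)|_0^{n-1}])\, e^{-k(h_{\mu_\psi}-h_{\mu_{1,\psi}})}$. Conditional SMB along $p_1$ says that, for a.e.\ $\omega$, the conditional measure on the fibre through $p_1(\omega)$ assigns mass $e^{-k(h-h_1)+o(k)}$ to the $k$-cylinder around $p_2(\omega)$. But $\mu_\psi^+(R_{n,k}(\omega))=\int_{[p_1(\omega)|_0^{n-1}]}\mu^+_{\zeta}\big(\{p_2\in[p_2(\omega)|_0^{k-1}]\}\big)\,d\mu_{1,\psi}(\zeta)$ integrates the conditional mass of a \emph{fixed} $p_2$-cylinder over \emph{varying} $\zeta$ in the $p_1$-cylinder, and SMB says nothing about this transverse dependence. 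Establishing such uniformity is exactly the hard core of Ledrappier--Young type arguments, and invoking summability of $\psi$, BDP and SOSC does not address it: those hypotheses concern the fibre maps $T_\omega$ and the potential, not the conditional structure of $\mu_\psi^+$ along the factor $p_1$. A second, smaller issue is the claim that $\tilde\pi^{-1}B(\tilde\pi(\omega),r)$ is covered by boundedly many $R_{n,k}$: continued-fraction cylinders neighbouring a typical one can be arbitrarily short, so a bounded covering number is not automatic; the paper handles this by Besicovitch covers restricted to the generic sets $A(n,\varepsilon)$, $C(n,\varepsilon)$, $\Omega(n,\varepsilon)$. For the final assembly the paper invokes Theorem~8.7 of \cite{MU-ETDS} rather than a bare Fubini/product-box argument, which absorbs the fibre-uniformity issue you flag at the end.
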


\begin{proof}

Firstly let take the projections of the conditional measures of the measure $\mu$ onto the fibers $J_{T, \omega}, \omega \in\Sigma_I^+$ defined in (\ref{fiberJ}), then look at the projections of $\nu_\psi$ on the first complex coordinate $z$. However, the first complex coordinate map $F_{T, 1}(z, w)$, of $F_T(z, w)$, is only piecewise differentiable on countably many pieces and it is not conformal on these pieces, and the measure $\mu_\psi$ can have different Lyapunov exponents $\chi_1(\mu_\psi), \chi_2(\mu_\psi)$ in the two real directions of the $z$-plane.
Moreover, in the first real coordinate of the $z$-plane we have to be careful since our iterated function system $(\varphi_n)_n$ is not uniformly contracting. All these facts require new ideas and methods. We will prove a Volume Lemma in the $z$-direction, and then a more difficult Global Volume Lemma for $\nu_\psi$ itself. This will imply the exact dimensionality for  $\nu_\psi$ on $X\times Y$ and will give the formula for the dimension of $\nu_\psi$.   

The proof contains several \textbf{main steps}, which are  detailed below:

\

\ \ \textbf{Step 1. Codings, notations, and the measure $\Ups$.}

It will be more convenient to work on $(0, 1) \times (0, 1)$ instead of $X$, so denote $Z := (0, 1) \times (0, 1),$
and consider the bijective transformation $$\theta: X \to Z, \ \theta(x, y) = (\frac 1x, \frac 1y), \ (x, y) \in X$$
Recalling the notation in  (\ref{ro0}), introduce then the coding map $\rho: \Sigma_I^+ \to Z$, $$\rho((m_0, n_0), (m_1, n_1), \ldots) = (\frac{1}{m_0 + \frac{1}{m_1+\ldots}}, \frac{1}{n_0+\frac{1}{n_1+\ldots}}) = (\rho_0(m_0, m_1, \ldots), \rho_0(n_0, n_1, \ldots)),$$ for every $ ((m_0, n_0), (m_1, n_1), \ldots) \in \Sigma_I^+$.
Let us denote also $$\rho_Y: \Sigma_I^+ \times Y \to Z \times Y, \ \rho_Y(\omega, y) = (\rho(\omega), y)$$
For the H\"older continuous summable potential $\psi: \Sigma_I \to \R$ from above, define the measures: 
\begin{equation}\label{ups}
\vartheta_\psi:= (\rho_Y)_* \mu^+_\psi \ \text{on} \ Z\times Y, \ \text{and} \  \ \Ups_\psi:= \rho_*\mu^+_\psi \ \text{on} \ Z
\end{equation}
Notice that $(\theta\times id_Y)_*\nu_\psi = \vartheta_\psi$.\ 
Let assume the potential $\psi$ \textbf{is fixed}, and drop the index $\psi$ from the notations above, so write $\mu$ for $\mu_\psi$, $\nu$ for $\nu_\psi$, $\mu^+$ for $\mu_\psi^+$. Also, $\Ups$ for $\Ups_\psi$, $\mu_1$ for $\mu_{1, \psi}$, $\mu_2$ for $\mu_{2, \psi}$, and $\vartheta$ for $\vartheta_\psi$. 
Recalling (\ref{mu1}) denote by $\mu_1$, $\mu_2$ respectively the measures $\mu_{1, \psi}$ and $\mu_{2, \psi}$ (associated to $\mu_\psi^+$). 
From (\ref{lyap}), (\ref{lyapglob}) write $\chi_1$ for $\chi_1(\mu_\psi^+)$, $\chi_2$ for $\chi_2(\mu_\psi^+)$, $\chi$ for $\chi(\mu_\psi)$. By (\ref{lambda1}) denote $\lambda_1$ for $\lambda_1(\mu_\psi^+)$ and $\lambda_2$ for $\lambda_2(\mu_\psi^+)$.

\

\ \ \textbf{Step 2. Past-independent potentials, and the projection measure $\Ups_1$.}

We observe now that as in \cite{Bo}, there exists a past-independent H\"older continuous summable function $\psi^+$ on $\Sigma_I$, which is cohomologous to $\psi$ in the class of bounded H\"older continuous functions, where by \textit{past-independent} we mean that $$\psi^+(\eta) = \psi^+(\eta'),$$ for any $\eta, \eta' \in \Sigma_I$ with $\eta_j = \eta'_j$, for $ j \ge 0$. 
Denote the restriction of $\psi^+$ to $\Sigma_I^+$ by $\tilde\psi^+$.
Let us now look closer at these two probability measures $\mu$ on $\Sigma_I$ and $\mu^+$ on $\Sigma_I^+$. Since  $\mu = \mu_\psi = \mu_{\psi^+}$, it follows from Gibbs property that for any integer $n >1 $ and any  $\zeta \in \Sigma_I$, 
 $$\mu([\zeta_{0} \ldots \zeta_n]) \approx \exp(S_n \psi^+(\zeta)) - n P(\psi^+)),$$
where the pressure $P(\psi^+)$ is taken with respect to the shift on $\Sigma_I$, and where the comparability constants do not depend on $n, \zeta$.
But, on the other hand, if $\mu_{\tilde \psi^+}$ denotes the equilibrium measure of $\tilde \psi^+$ on $\Sigma_I^+$, we have from the Gibbs property that, for any $\xi \in \Sigma_I^+$, any $n >1$ and any $\zeta\in \Sigma_I$ with $\zeta|_0^\infty = \xi$, the following estimate holds:
$$\mu_{\tilde \psi^+}([\xi_0 \ldots \xi_n]) \approx \exp(S_n\tilde\psi^+(\xi) - n P(\tilde\psi^+)) = \exp(S_n\psi^+(\zeta) - nP(\tilde\psi^+)),$$
where now the pressure $P(\tilde\psi^+)$ is taken with respect to the shift on $\Sigma_I^+$, and where the comparability constants do not depend on $n, \xi$. But clearly $P(\psi^+) = P(\tilde\psi^+)$. Hence from the uniqueness of Gibbs measures for given H\"older continuous summable potentials on $\Sigma_I^+$, 
\begin{equation}\label{muplus}
\pi_{0*}\mu_\psi = \mu_\psi^+ = \pi_{0*} \mu_{\psi^+} = \mu_{\tilde\psi^+}
\end{equation}

Also, since $(\Sigma_I, \sigma, \mu)$ is the natural extension of the system $(\Sigma_I^+, \sigma, \mu^+)$ (or by using the Brin-Katok formula and the estimates for the measure on Bowen balls)),  it follows that 
\begin{equation}\label{entplus}
h_{\mu} = h_{\mu^+}
\end{equation}

Now, consider the canonical projection on the $X$-coordinate $$p_X: X \times Y \to X$$
Since by (\ref{ups}), $\Ups = (\theta\circ p_X)_*\nu = \rho_*\mu^+$ on $Z$, we see immediately from the definition of the pointwise dimension $\delta$ that, if $z = \theta(x, y)$ with $(x, y) \in X$, then
\begin{equation}\label{hdeg}
\delta(\Ups)(z) = \delta((p_X)_*\nu)(x, y) 
\end{equation}

Recall that $\Ups$ is a measure on $Z = (0, 1) \times (0, 1)$, and denote its canonical projection in the first coordinate on $(0, 1)$ by $\Ups_1$.
Hence, in the notations of (\ref{p1}), (\ref{mu1}) and (\ref{ro0}),
\begin{equation}\label{upsmu1}
\Ups_1 = (\rho_0)_*\mu_1
\end{equation}
As $\mu^+$ is $\sigma$-invariant on $\Sigma_I^+$, then $\mu_1$ is $\sigma$-invariant on $\Sigma_{\N^*}^+$. 

\

\ \ \textbf{Step 3. Pointwise dimension for $\Ups_1$.}

From the definition (\ref{ro0}) of the map $\rho_0$ as the representation of irrational numbers as continued fractions, we obtain that the measure $\Ups_1$ is the projection on $(0, 1)$ of an invariant probability measure $\mu_1$ on $\Sigma_{\N^*}^+$, with respect to the countable iterated function system of (\ref{varphi}) in the first real coordinate. 

However,  the map $\varphi_1$ has a parabolic point at $0$. Thus $(\varphi_n)_n$ is not a uniformly contracting iterated function system. We will associate to it a uniformly contracting iterated function system.
Indeed let a point $x \in (0, 1), x \ne \frac{\sqrt 5 - 1}{2} = \frac{1}{1+\frac{1}{\ldots}}$. Define the iterated function system $\{\hat\varphi_n, n \ge 1\} = \{\varphi_1^k\circ\varphi_j,  \ \varphi_j \circ \varphi_1^k, \ k \ge 0, j >1\}$, and notice that this sytem is uniformly contracting on $[0, 1)$.  Let us denote the projection of $\mu_1$  on $[0, 1)$  relative to the system $(\hat\varphi_n)_n$ by $\hat\Ups_1$. Then since $x \ne \varphi_1(\varphi_1(\ldots))$, it follows that for any small $r>0$ there exists an integer $n(r)\ge 1$ and $j_1, \ldots, j_{n(r)} \in \N^*$, such that 
\begin{equation}\label{ups1hat}
\hat\Ups_1(B(x, r)) = \Ups_1(B(x, r)) = \mu_1([j_1\ldots j_{n(r)}]).
\end{equation}
Thus since the new system $(\hat\varphi_n)_n$ is uniformly contracting, it follows by the main result in \cite{MU-Adv} (restricted to the case when the parameter space  consists of only one point), that the measure $\hat\Ups_1$ is exact dimensional on $[0, 1)$. Hence from (\ref{ups1hat}), $\Ups_1$ is also exact dimensional on $[0, 1)$, and the pointwise dimension of $\Ups_1$  is equal to the Hausdorff dimension of $\Ups_1$. 

Let us now denote the pointwise dimension of $\Ups_1$ by $\delta_1$. In our case there are no overlaps in the countable iterated function system, so the open set condition  is satisfied. Thus, the projectional entropy of $\mu_1$ is the same as its usual entropy $h_{\mu_1}$. Hence from \cite{MU-Adv} it follows that for $\Ups_1$-a.e. point $x \in (0, 1)$, 
\begin{equation}\label{delta1}
\delta_1 = \delta(\Ups_1)(x) = \mathop{\lim}\limits_{r\to 0} \frac{\log \Ups_1(B(x, r))}{\log r} = \frac{h_{\mu_1}}{\chi_1}.
\end{equation}

\ \ \textbf{Step 4. Geometry in the $z$-direction, and generic points.}

Let us assume first, as in part b) of the statement, that 
\begin{equation}\label{lambda12}
\lambda_1 < \lambda_2
\end{equation}
 Consider arbitrary numbers $n >1$ and $\vp>0$, and define the Borel  set $C(n, \vp) \subset [0, 1)$ by:
$$
C(n, \vp):= \{ x\in (0, 1), \ \frac{\log \Ups_1(B(x, r))}{\log r} \in (\delta_1 - \vp, \delta_1 + \vp), \text{for} \ 0 < r \le \lambda_2^{n(1-\vp)}\}
$$
From the exact dimensionality of  $\Ups_1$, it follows that for every $\vp>0$ there exists an integer $n(\vp) >1$ and a positive function $\kappa(\cdot)$ with $\mathop{\lim}\limits_{\vp \to 0} \kappa(\vp) = 0$, such that for every $n > n(\vp)$,
\begin{equation}\label{cne}
\Ups_1(C(n, \vp)) > 1- \kappa(\vp)
\end{equation}

We identify in the sequel the space $\Sigma_I^+$ with $\Sigma_{\N^*}^+\times \Sigma_{\N^*}^+$ by the map $\Psi: \Sigma_I^+\to \Sigma_{\N^*}^+\times \Sigma_{\N^*}^+, $ 
\begin{equation}\label{psio}
\Psi(\zeta) = (\omega, \eta),
\end{equation}
where for any $\zeta = ((m_0, n_0), (m_1, n_1), \ldots) \in \Sigma_I^+$,  write $\omega = (m_0, m_1, \ldots) \in \Sigma_{\N^*}^+$, and $\eta = (n_0, n_1, \ldots)\in \Sigma_{\N^*}^+$.
Given the contractions $\phi_n, n \ge 1$  of (\ref{varphi}), and any $\omega \in \Sigma_{\N^*}^+$, denote $$\phi^n_\omega := \phi_{\omega_0}\circ\phi_{\omega_1} \circ \ldots \circ \phi_{n-1}$$

Now for arbitrary $n > 1$, $\vp>0$, define the following Borel measurable set 
\begin{equation}\label{aneps}
\begin{aligned}
A(n, \vp) &:= \{(\omega, \eta)  \in \Sigma_{\N^*}^+\times \Sigma_{\N^*}^+, \ |\frac{S_{j}\psi^+(\omega, \eta)}{j}-\int\psi^+ d\mu^+| < \vp, \ \text{and} \ \\
&|(\phi_{\omega}^{j})'(\rho_0\sigma^j\omega)| \in (e^{j(-\chi_1-\vp)}, e^{j(-\chi_1 + \vp)}) \ \text{and} \ |(\phi_\eta^j)' (\rho_0\sigma^j\eta)| \in (e^{(j(-\chi_2 - \vp)}, e^{j(-\chi_2 + \vp)}), \ \forall  j \ge n\}
\end{aligned}
\end{equation}
Since $\mu^+$ and $\mu_1, \mu_2$ are ergodic with respect to the shift maps, and using  Birkhoff Ergodic Theorem and (\ref{lyap}), it follows that $\mu^+(A(n, \vp))$ is close to 1, for large $n$. Without loss of generality assume that for the functions $n(\vp)$ and $\kappa(\vp)$ above, we have for any $n > n(\vp)$,
\begin{equation}\label{ane}
\mu^+(A(n, \vp)) > 1-\kappa(\vp).
\end{equation}
 Therefore  if $n > n(\vp)$, then
$\mu_1(p_1(A(n, \vp))) > 1-\kappa(\vp)$. Denote  by $$\tilde C(n, \vp):= C(n, \vp) \cap \rho_0(p_1(A(n, \vp))) \ \subset (0, 1)$$
Since by (\ref{upsmu1}), $\Ups_1 = \rho_{0*}\mu_1$, it follows from (\ref{cne}) and (\ref{ane}) that if $n > n(\vp)$,
\begin{equation}\label{tildec}
\Ups_1(\tilde C(n, \vp)) > 1-2\kappa(\vp)
\end{equation}

Now consider $(\omega, \eta) \in A(n, \vp)$ and its $\rho$-projection, $z = \rho(\omega, \eta) =  (\rho_0(\omega), \rho_0(\eta)) \in (0, 1) \times (0, 1)$. We will estimate the measure $\Ups(B(z, \lambda_2^{(1+\vp)n}))$ by covering a  large portion of $B(z, \lambda_2^{(1+\vp)n})$ with an optimal cover, consisting of $\rho$-projections of cylinders of type $$[\omega_0'\ldots \omega_n'] \times [\eta_0' \ldots \eta_n']$$ The $\Ups$-measure of the projection of such cylinder is equal to the $\mu^+$-measure of  the cylinder $[(\omega_0', \eta_0') \ldots (\omega_n', \eta_n')]$, and it can be estimated using the Gibbs property of $\mu^+$. So we shall estimate the number of such cylinders in the optimal cover, by looking at their projections on the first real coordinate. 
From (\ref{psio}) we identify $\Sigma_I^+$ with $\Sigma_{\N^*}^+\times \Sigma_{\N^*}^+$, and can consider that $\psi^+$ is defined on $\Sigma_{\N^*}^+ \times \Sigma_{\N^*}^+$. 
As the IFS $\{\varphi_n, n \ge 1\}$ satisfies Open Set Condition, and by using the Gibbs property of $\mu^+$, it follows that there exists a constant $C>0$ such that for any $n \ge 1$ and any $(\omega', \eta') \in \Sigma_I^+$,  we have:
\begin{equation}\label{projmu}
\begin{aligned}
 \Ups(\rho([(\omega_0', \eta_0') \ldots (\omega_n', \eta_n')])) = &\mu^+[(\omega_0', \eta_0') \ldots (\omega_n', \eta_n')] \in \\
 &\in \Big(\frac{e^{S_n\psi^+(\omega', \eta') - nP(\psi^+)}}{C}, \ C e^{S_n\psi^+(\omega', \eta') - nP(\psi^+)}\Big)
 \end{aligned}
\end{equation}

Introduce now for arbitrary $n >1$, $\vp>0$, the following Borel subset of $(0, 1)\times (0, 1)$, 
\begin{equation}\label{om}
\hat\Om(n, \vp):= \big\{z \in Z, \ \   \Ups\big(B(z, r) \ \cap \ \rho(A(n, \vp)) \cap (C(n, \vp)\times (0, 1)) \big) > \frac 12 \Ups(B(z, r)), \ \forall \ 0 < r \le \lambda_2^{(1+\vp)n} \big\}
\end{equation}
Denote the complement of $\hat\Omega(n, \vp)$ in $(0, 1) \times (0, 1)$ by $\hat\Omega^c(n, \vp))$. From definition, if $z \in \hat\Omega^c(n, \vp)$, then there  exists $ 0 < r = r(z) < \lambda_2^{(1+\vp)n}$ so that: 
\begin{equation}\label{doi}
\Ups(B(z, r) \setminus (\rho(A(n, \vp)) \cap C(n, \vp) \times (0, 1))) >  \frac 12 \Ups(B(z, r))
\end{equation}
In this case we can cover the set $\hat\Omega^c(n, \vp)$ with balls of type $B(z, r(z))$ for all $ z \in \hat\Omega^c(n, \vp)$. Then, from Besicovitch Covering Theorem applied to a bounded subset of $Z = (0, 1) \times (0, 1)$, there exists a subcover with such balls for $z \in \mathcal F$, such that $$\hat\Omega^c(n, \vp) \subset \mathop{\cup}\limits_{z \in \mathcal F} B(z, r(z)), $$
and the multiplicity of this subcover is finite and bounded  by a constant $M$ independent of $n, \vp$. 
Thus from  (\ref{doi}) and the bounded multiplicity of $\mathcal F$, and by using (\ref{cne}) and (\ref{ane}), one obtains:
$$
\begin{aligned}
\Ups(\hat\Omega^c(n, \vp)) &\le \mathop{\sum}\limits_{z\in \mathcal F} \Ups(B(z, r(z)) \le 2\mathop{\sum}\limits_{z \in \mathcal F} \Ups(B(z, r) \setminus (\rho(A(n, \vp)) \cap C(n, \vp) \times (0, 1))) \\
& \le 2M \Ups\big(\mathop\cup \limits_{ z \in \mathcal F} B(z, r(z)) \setminus (\rho(A(n, \vp)) \cap C(n, \vp) \times (0, 1)) \big) \\ &\le 2M \Ups\big(Z\setminus (\rho(A(n, \vp)) \cap C(n, \vp) \times (0, 1)) \big) \\ & =  2M\big(1 - \Ups(\rho(A(n, \vp)) \cap C(n, \vp) \times (0, 1))\big) < 4M\kappa(\vp)
\end{aligned}
$$
Hence, $
\Ups(\hat\Omega(n, \vp)) \ge 1 - 4M\kappa(\vp)$. \ 

Now we can do the same argument as before for the projection measure $\Ups_1$ of $\Ups$ on the first coordinate, i.e. $\Ups_1 = p_{1*} \Ups$, where $p_1$ denotes the projection on the first real coordinate. Hence we obtain a Borel set $\Omega_1(n, \vp) \subset (0, 1)$, with $$\Ups_1(\Om_1(n, \vp)) > 1- 4M \kappa(\vp),$$ and such that for any point $x \in \Om_1(n, \vp)$ and any $0 < r < \lambda_2^{n(1+\vp)}$, we have:
\begin{equation}\label{om1}
\Ups_1\big(B(x, r) \cap C(n, \vp) \cap p_1(\rho(A(n, \vp))) \big) > \frac 12 \Ups_1(B(x, r)).
\end{equation}

Denote in the sequel
\begin{equation}\label{omn}
\Om(n, \vp) := \hat\Om(n, \vp) \cap p_1^{-1}\Om_1(n, \vp)
\end{equation}
From above, since $\Ups(p_1^{-1}\Om_1(n, \vp)) = \Ups_1(\Om_1(n, \vp))$, it follows that 
\begin{equation}\label{omups}
\Ups(\Om(n, \vp)) > 1-8M \kappa(\vp).
\end{equation}

Let us take a point $z \in \Omega(n, \vp)$ and assume $z = \rho(\omega, \eta)$.
Thus for any integer $n' \ge n$,
\begin{equation}\label{upse}
\Ups(B(z, \lambda_2^{(1+\vp)n'})) \le 2 \Ups\big(B(z, \lambda_2^{(1+\vp)n'}) \cap \rho(A(n, \vp)) \cap C(n, \vp) \times (0, 1))\big).
\end{equation}

So for any $n'\ge n$, in order to estimate the $\Ups$-measure of the  ball $B(z, \lambda_2^{(1+\vp)n'}) \subset (0, 1) \times (0, 1),$ it is enough to consider only its generic points in the sense of (\ref{upse}). Let us take then a point $w \in B(z, \lambda_2^{(1+\vp)n'}) \cap \rho(A(n, \vp)) \cap C(n, \vp) \times (0, 1)$, with $w = \rho(\omega', \eta')$. Then, from the definition of $\Ups$ as the projection measure  $\rho_*\mu^+$, $$\Ups(\rho([\omega_0'\ldots \omega'_{n'}]\times [\eta_0'\ldots \eta'_{n'}]) = \mu^+([\omega_0'\ldots \omega'_{n'}]\times [\eta_0'\ldots \eta'_{n'}]).$$
For any $j \ge 1$ and any $(\omega', \eta') \in \Sigma_I^+$ (recall that $\Sigma_I^+$ is identified by $\Psi$ with $\Sigma_{\N^*}^+\times \Sigma_{\N^*}^+$), denote the projection of the associated $j$-cylinder $[\omega_0' \ldots \omega_j'] \times [\eta_0' \ldots \eta_j']$ by:
\begin{equation}\label{drept}
R_j(\omega', \eta') := \rho([\omega_0' \ldots \omega_j'] \times [\eta_0' \ldots \eta_j']) \ \subset (0, 1) \times (0, 1).
\end{equation}
Now let $(\omega', \eta') \in \Sigma_I^+$ with $\rho(\omega', \eta') \in \rho(A(n, \vp) \cap C(n, \vp) \times (0, 1))$. So for any $n' \ge n$, $|\frac{S_{n'}\psi^+(\omega', \eta')}{n'} - \int \psi^+ d\mu^+| < \vp$.
Next we use the definition of $\Ups = \rho_*\mu^+$ and the Gibbs property of $\mu^+$ from (\ref{projmu}). But $(\omega', \eta')$ is generic, and $$P(\psi^+) = h_{\mu^+} + \int\psi^+ d\mu^+.$$  Hence for any $(\omega', \eta') \in A(n, \vp) \cap \rho^{-1}(C(n, \vp) \times (0, 1))$ and any $n' \ge n$ we have,
\begin{equation}\label{upsgen}
\frac 1C e^{-n'(h_{\mu^+}+\vp)} <  \Ups(R_{n'}(\omega', \eta')) < C e^{-n'(h_{\mu^+}-\vp)},
\end{equation}
where the constant $C>0$ does not depend on $n, n', \omega', \eta'$.
Recall that $z = \rho(\omega, \eta) = (x, y) \in \Om(n, \vp)$, and we want to cover the set $$B(z, \lambda_2^{n'(1+\vp)}) \cap \rho(A(n, \vp)) \cap  C(n, \vp) \times (0, 1),$$ with rectangles in $(0, 1) \times (0, 1)$ of type $R_{n'}(\omega', \eta')$, \ where $(\omega', \eta') \in A(n, \vp) \cap \rho^{-1}(C(n, \vp) \times (0, 1))$. But by (\ref{drept}), $$R_{n'}(\omega', \eta') = \rho_0([\omega'_0 \ldots \omega'_{n'}]) \times \rho_0([\eta'_0 \ldots \eta'_{n'}]).$$ Hence if $(\omega', \eta')  \in A(n, \vp) \cap \rho^{-1}(C(n, \vp) \times (0, 1))$,  and $n'\ge n$, and $(z_1', z_2'):= \rho(\omega', \eta')$, then by (\ref{aneps}),
\begin{equation}\label{rn}
B(z_1', e^{n'(\chi_1-\vp)}) \times B(z_2', e^{n'(\chi_2-\vp)})   \subset \  R_{n'}(\omega', \eta')  \subset  B(z_1', e^{n'(\chi_1+\vp)}) \times B(z_2', e^{n'(\chi_2 +\vp)}).
\end{equation}

\

\ \ \textbf{Step 5. Estimates for the number of covering rectangles and $\Ups$.}

Let us now cover the set $B(z, \lambda_2^{n'(1+\vp)}) \cap \rho(A(n, \vp)) \cap  C(n, \vp) \times (0, 1))$ with rectangles $R_{n'}(\omega', \eta')$, with $(\omega', \eta')$ from a finite family $\mathcal G$, such that the projections of these rectangles on the first real coordinate intersect with multiplicity bounded by $M$; this is possible by using the Besicovitch Covering Theorem.

Denote the number of rectangles in $\mathcal G$ by $N(n', \vp)$; clearly $N(n', \vp)$ depends also on $z$. We want to estimate this number of rectangles $N(n', \vp)$.
In order to do this, notice from (\ref{rn}) that the projection on first coordinate of an arbitrary rectangle from $\mathcal G$ is a ball of some radius $r$, with all these radii $r$ satisfying$$r \in (\lambda_1^{n'(1+\vp)}, \lambda_1^{n'(1-\vp)})$$

 Denote the projection on first coordinate of $R_{n'}(\omega', \eta')$ by $D_{n'}(\omega', \eta')$, and denote by $\mathcal G_1$ the set of projections  $D_{n'}(\omega', \eta')$ for the rectangles from $\mathcal G$. From construction, $\mathcal G_1$ has multiplicity bounded above by $M$. Since in any set  $D_{n'}(\omega', \eta')$ from $\mathcal G_1$ there are points from $C(n, \vp)$, and $\lambda_1 < \lambda_2$,  there exists a constant (denoted also $C$) so that for every $n' \ge n$,
\begin{equation}\label{estdn}
\frac 1C \lambda_1^{n'(\delta_1+\vp)} < \Ups_1(D_{n'}(\omega', \eta')) < C \lambda_1^{n'(\delta_1-\vp)}
\end{equation}
From the definition of $\mathcal G_1$ and (\ref{omn}), it follows that the sets $D_{n'}(\omega', \eta')$ from $\mathcal G_1$ cover $B(x, \lambda_2^{n'(1 +\vp)}) \cap C(n, \vp) \cap \rho_0(p_1(A(n, \vp)))$. 
But from (\ref{om1}), 
$$\Ups_1(B(x, \lambda_2^{n'(1 +\vp)}) \cap C(n, \vp) \cap \rho_0(p_1(A(n, \vp)))) \ge \frac 12 \Ups_1(B(x, \lambda_2^{n'(1+\vp)})
$$
Thus from (\ref{estdn}) and as $x \in C(n, \vp)$ and $\mathcal G_1$ has multiplicity bounded by $M$, it follows that 
$$
\frac {1}{MC} \lambda_1^{n'(\delta_1+\vp)} N(n', \vp) < \lambda_2^{n'(\delta_1-\vp)} \ \text{and} \  2C \lambda_1^{n'(\delta_1-\vp)} N(n', \vp) > \lambda_2^{n'(\delta_1 + \vp)}
$$
So there exists a constant $C_1>0$ such that for all $z\in \Om(n, \vp)$ and any $n' \ge n$,
\begin{equation}\label{Nn}
\frac {1}{C_1} \lambda_2^{n'(\delta_1+\vp)}\lambda_1^{n'(-\delta_1+\vp)} \le N(n', \vp) \le C_1 \lambda_2^{n'(\delta_1-\vp)} \lambda_1^{-n'(\delta_1+\vp)}
\end{equation}

We now use this estimate of $N(n', \vp)$, in order to estimate  the $\Ups$-measure of  $B(z, \lambda_2^{n'(1+\vp)})$. From (\ref{upse}) it follows however that it is enough to estimate the measure $$\Ups\big(B(z, \lambda_2^{(1+\vp)n'} \cap \rho(A(n, \vp)) \cap C(n, \vp) \times (0, 1))\big),$$ and notice that the set $B(z, \lambda_2^{(1+\vp)n'} \cap \rho(A(n, \vp)) \cap C(n, \vp) \times (0, 1))$ is covered with the $N(n', \vp)$ rectangles from $\mathcal G$. But the $\Ups$-measure of every rectangle from the cover $\mathcal G$  was estimated in (\ref{upsgen}). Thus there exists a constant $C_2 > 0$ so that for any $z \in \Om(n, \vp)$
and for any integer $n'>n$, 
\begin{equation}\label{gata}
\hspace{.54in} \frac{1}{C_2} \lambda_2^{n'(\delta_1+\vp)} \lambda_1^{n'(-\delta_1+\vp)} e^{-n'(h_{\mu^+}+\vp)} < \Ups(B(z, \lambda_2^{n'(1+\vp)})) < C_2\lambda_2^{n'(\delta_1-\vp)} \lambda_1^{n'(-\delta_1-\vp)} e^{-n'(h_{\mu^+}-\vp)}.
\end{equation}

\

\ \ \textbf{Step 6. Volume Lemmas for the measures $\Ups$ and $\nu$.}

Now, if $r>0$ is arbitrarily small, there must exist some large integer $n'> n$ such that $$\lambda_2^{(n'+1)(1+\vp)} < r < \lambda_2^{n'(1+\vp)}$$ This implies that, $B(z, \lambda_2^{(n'+1)(1+\vp)}) \subset B(z, r) \subset B(z, \lambda_2^{n'(1+\vp)})$. But when $n'\to \infty$, then we have $r\to 0$, and viceversa. 
Hence if $z \in \Om(n, \vp)$, then from (\ref{gata}) and the last two displayed estimates above, one obtains the following inequalities:
\begin{equation}\label{estpt}
\begin{aligned}
 \frac{\log\lambda_2(\delta_1-\vp) - \log \lambda_1(\delta_1 + \vp) - h_{\mu^+} + \vp}{\log \lambda_2(1+\vp)} &\le \mathop{\lim}\limits_{r\to 0} \frac{\log \Ups(B(z, r))}{\log r} \le \\
& \le \frac{\log\lambda_2(\delta_1+\vp) - \log
 \lambda_1(\delta_1 - \vp) - h_{\mu^+} - \vp}{\log \lambda_2(1+\vp)}.
 \end{aligned}
\end{equation}
But on the other hand, $\Om(n, \vp) \subset \Om(n+1, \vp), n \ge 1$, and thus from (\ref{omups}), 
$$
\Ups\big(\mathop{\cup}\limits_{n \ge 1} \Om(n, \vp) \big) \ge 1- 8M\kappa(\vp).$$
Therefore, if we define the Borel set in $(0, 1) \times (0, 1)$, $$\Om:= \mathop{\cap}\limits_{m \ge 1} \mathop{\cup}\limits_{n \ge 1} \Om(n, \frac 1m),$$
then from the last displayed inequality, and since $\mathop{\lim}\limits_{\vp\to 0} \kappa(\vp) = 0$, it follows that $$\Ups(\Om) = 1.$$ 
In conclusion, from (\ref{estpt}) and (\ref{delta1}), the measure $\Ups$ is exact dimensional and for all $z\in \Om$,
$$  \delta(\Ups)(z) =   \mathop{\lim}\limits_{r\to 0} \frac{\log \Ups(B(z, r))}{\log r} = \frac{h_{\mu^+} - h_{\mu_1}(1-\frac{\chi_2}{\chi_1})}{\chi_2}.
$$

Now we use the exact dimensionality of the projection measure $\Ups = \rho_*\mu^+$ proved above, and the exact dimensionality of the conditional measures on fibers from Theorem \ref{exactT}, together with Theorem 8.7 of \cite{MU-ETDS} applied to $\nu$. Recall that $\mu = \mu_\psi$.  Thus the measure $\nu = \nu_\psi$ is exact dimensional on $X\times Y$. 
Moreover,  from  last displayed formula and (\ref{entplus}), it follows that the Hausdorff (and pointwise) dimension of $\nu$ is given by,
$$HD(\nu) = \delta(\nu) = \delta(\Ups) + \frac{h_\mu}{\chi(\mu)} = \frac{h_{\mu} - h_{\mu_1}(1-\frac{\chi_2(\mu)}{\chi_1(\mu)})}{\chi_2(\mu)} + \frac{h_{\mu}}{\chi(\mu)}.
$$

The case $\lambda_1 \ge \lambda_2$ is proved similarly. 
This concludes  the proof of Theorem \ref{modexT}.

\end{proof}

\

\section{Real-analyticity and Variational Principle for dimension}

Next,   we study equilibrium measures for \textbf{geometric potentials} with respect to endomorphisms $F_T \in \mathcal E$. Consider thus a  conformal Smale skew-product $T$ and let $F_T \in \mathcal E$ given by (\ref{generalF}). 
Define the associated $s$-\textit{geometric potentials} for $s>0$, by
\begin{equation}\label{geomp}
 \psi_{T, s}:\Sigma_I \to \R, \ \ \psi_{T, s}(\eta) := s\log|T_{ \eta|_0^\infty} '(\hat\pi_2\eta)|, \ \text{for} \ \eta \in \Sigma_I,
\end{equation}
where $\hat\pi_2$ was defined in (\ref{pi2hat}).
We show below that $\psi_{T, s}$ is H\"older continuous. By Theorem \ref{eqmesi}, $\psi_{T, s}$ is summable if and only if its pressure $P(\psi_{T, s}) < \infty$.

Define thus  the interval of parameters $s>0$ for which the potential $\psi_{T, s}$ is summable,
\begin{equation}\label{calFT}
\mathcal F(T) := \{s \ge 0, \mathop{\sum}\limits_{i \in I} \sup_{\om\in\Sigma_I^+}\sup_{\xi \in \hat\pi_2[i\om]} |T_{i\om}'(\xi)|^s < \infty\}.
\end{equation} 

 In the sequel, assume that $\psi_{T, s}$ is summable. If $\psi_{T, s}$ is summable on $\Sigma_I$ and H\"older continuous, then by Theorem \ref{eqmesi}, $\psi_{T, s}$ has a unique equilibrium measure $\mu_{T, s}$ on $\Sigma_I$. 

Denote the fiber limit sets by $J_{T, \om}, $   $\om \in  \Sigma_I^+$. 
Recall also the definition of the Lyapunov exponent $\chi(\mu_{T, s})$ from (\ref{lyapglob}).
Let the measure on $\Sigma_I^+$, $$\mu_{T, s}^+ := \pi_{0*}\mu_{T, s},$$  where $\pi_0:\Sigma_I \to \Sigma_I^+$ is the canonical truncation map $\pi_0(\eta) = \eta|_0^\infty, \eta \in \Sigma_I$. For $\mu_{T, s}^+$-a.e $\om \in \Sigma_I^+$, let $\nu_{T, s}^\om$ be the projection measure on $J_{T, \om}$ of the conditional measure $\mu_{T, s}^\om$ of $\mu_{T, s}$, $$\nu_{T, s}^\om = \hat\pi_{2*}\mu_{T, s}^\om.$$

\

The next Theorem shows that  the dimension of $\nu_{T, s}^\om$ does not depend on $\om$, and  that this dimension value depends \textbf{real-analytically} on the parameter $s$.

\

\begin{thm}\label{geoms}
Let a conformal Smale skew-product $T$ and the endomorphism $F_T$ defined in (\ref{generalF}), and let $\psi_{T, s}$ as in (\ref{geomp}).  
Then:

\ a) If $s \in \mathcal F(T)$ then $\nu_{T, s}^\om$ is exact dimensional on $J_{T, \om}$, and  for $\mu_{T, s}^+$-a.e. $\om\in \Sigma_I^+$, $HD(\nu_{T, s}^\om)$ does not depend on $\om \in \Sigma_I^+$ and   $$\delta_{T, s}:= HD(\nu_{T, s}^\om) = \frac{h(\mu_{T, s})}{\chi(\mu_{T, s})}.$$

\  b) The above dimension value $\delta_{T, s}$ depends real-analytically on the parameter $s$ from  the interval $\mathcal F(T)$.

\end{thm}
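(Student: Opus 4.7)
The plan is to reduce both parts to the thermodynamic formalism of countable alphabet shifts applied to $\psi_{T,s}$, combined with Theorem \ref{exactT}. First I verify that $\psi_{T,s}$ is H\"older continuous and summable. H\"older continuity on $\Sigma_I$ is immediate from condition (e) of Definition \ref{confSmale} since $\psi_{T,s} = s \cdot \log|T'_{\cdot}(\hat\pi_2 \cdot)|$; summability is exactly the condition defining $\mathcal F(T)$, because the $s$-series $\sum_{i \in I}\exp(\sup \psi_{T,s}|_{[i]})$ is comparable to $\sum_{i \in I}\sup_{\om,\xi}|T'_{i\om}(\xi)|^s$. Theorem \ref{eqmesi} then provides a unique Gibbs equilibrium measure $\mu_{T,s}$ on $\Sigma_I$. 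Applying Theorem \ref{exactT} with $\psi = \psi_{T,s}$ yields exact dimensionality of $\nu_{T,s}^\om = \hat\pi_{2*}\bar\mu_{T,s}^\om$ on $J_{T,\om}$ with
$$HD(\nu_{T,s}^\om) \;=\; \frac{h(\mu_{T,s})}{\chi(\mu_{T,s})},$$
whose right-hand side manifestly does not depend on $\om$, proving part (a).

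For part (b), the plan is to differentiate the pressure. Let $P(s):=P(\psi_{T,s})$. Because the family $s \mapsto \psi_{T,s}$ depends (real-)analytically on $s$ as a map into the Banach space of bounded H\"older continuous potentials on $\Sigma_I$, the associated family of Perron–Frobenius operators also depends analytically on $s$. The spectral gap theory for countable Gibbs–Markov systems developed by Mauldin and Urba\'nski \cite{gdms} guarantees that the leading simple eigenvalue $e^{P(s)}$ persists under small analytic perturbations in the interior of $\mathcal F(T)$, and standard Kato-type perturbation arguments give that $P(s)$ is real-analytic on $\mathcal F(T)$. Differentiating the Variational Principle at its unique maximiser $\mu_{T,s}$ yields
$$P'(s) \;=\; \int_{\Sigma_I}\log|T'_{\eta|_0^\infty}(\hat\pi_2\eta)|\,d\mu_{T,s}(\eta) \;=\; -\chi(\mu_{T,s}),$$
so $\chi(\mu_{T,s})$ is also real-analytic in $s$, and strictly positive throughout $\mathcal F(T)$.

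Combining these with $P(s) = h(\mu_{T,s}) - s\chi(\mu_{T,s})$ gives the closed-form expression
$$\delta_{T,s} \;=\; \frac{h(\mu_{T,s})}{\chi(\mu_{T,s})} \;=\; s + \frac{P(s)}{\chi(\mu_{T,s})} \;=\; s - \frac{P(s)}{P'(s)},$$
which, as a ratio of real-analytic functions with nonvanishing denominator, is real-analytic on $\mathcal F(T)$. The main obstacle I expect is justifying the analytic perturbation step rigorously in the countable alphabet setting, since the parabolic nature of $F_T$ (through the map $\varphi_1$) could in principle interfere; however, the potential $\psi_{T,s}$ is defined via the uniformly contracting conformal fibers $T_\om$ of the underlying Smale skew-product and not via $F_T$ itself, so the usual spectral theory of \cite{gdms} applies to $\mathcal L_{\psi_{T,s}}$ without modification, and the proof proceeds as in the standard countable IFS case.
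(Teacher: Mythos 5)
Your proposal is correct and follows essentially the same route as the paper: part (a) is handled identically by verifying H\"older continuity and summability and then invoking Theorem \ref{exactT}, and part (b) rests on real-analyticity of the pressure function together with a derivative-of-pressure formula to control $h(\mu_{T,s})$ and $\chi(\mu_{T,s})$. The paper cites \cite{Ru82} and \cite{Ru} and invokes the Ruelle derivative formula $\partial_v P(\psi_{T,s}+v\psi_{T,s})\big|_{v=0}=\int\psi_{T,s}\,d\mu_{T,s}$, while you appeal directly to the countable-alphabet spectral-gap perturbation theory of \cite{gdms}, which is arguably the more appropriate citation in the infinite-alphabet setting; the underlying mechanism (analytic leading eigenvalue of the transfer operator) is the same. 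Your remark that the parabolicity of $\varphi_1$ is irrelevant here because $\psi_{T,s}$ is built only from the uniformly contracting fiber maps $T_\om$ is correct and worth making explicit, and your closed form $\delta_{T,s}=s-P(s)/P'(s)$ is a clean reformulation of the paper's conclusion that $\delta_{T,s}=h(\mu_{T,s})/\chi(\mu_{T,s})$ is a ratio of real-analytic functions with nonvanishing (indeed uniformly positive) denominator.
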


\begin{proof}

For part a), recall first from (\ref{pi2hat}) that for $T$ fixed as above and for any $\eta \in \Sigma_I$,  $$\hat\pi_2(\eta) = T_{\eta|_{-1}^\infty}\circ T_{\eta|_{-2}^\infty} \circ\ldots,$$ where $T_\om(w)$ is the fiber map of $T$, and $\om= ((m_0, n_0), (m_1, n_1), \ldots) \in \Sigma_I^+$. 
The limit set $J_{T, \om}$ is equal to $\hat\pi_2([\om]), \ \om \in \Sigma_I^+$. By  (\ref{geomp}) and Definition \ref{confSmale} it follows that $\psi_{T, s}$ is H\"older continuous, since the map $\eta \to \hat\pi_2(\eta)$ is H\"older continuous on $\Sigma_I$.

Denote the set of parameters $s$ for which the potential $\psi_{T, s}$ is summable by $\mathcal F(T)$ as above. From Theorem \ref{eqmesi}, $\mathcal F(T)$ is exactly the set of $s>0$ for which $P(\psi_{T, s}) < \infty$, and one can see from definition that $\mathcal F(T)$ is an interval. 
Hence for any $s \in \mathcal F(T)$,   $\psi_{T, s}$ is summable and H\"older continuous on $\Sigma_I$. Then, by Theorem \ref{exactT}, for $\mu_{T, s}^+$-a.e. $\om \in \Sigma_I^+$,  $$HD(\nu_{T, s}^\om) = \delta_{T, s} := \frac{h(\mu_{T, s})}{\chi(\mu_{T, s})}.$$

For  b), $\mu_{T, s}$ is  equilibrium measure of $\psi_{T, s}$, thus $P(\psi_{T, s}) = h(\mu_{T, s}) + \int\psi_{T, s}d\mu_{T, s}$, so $$h(\mu_{T, s}) = P(\psi_{T, s}) - \int\psi_{T, s} d\mu_{T, s}.$$ But the pressure $P(\psi_{T, s})$ depends real-analytically on $s$ (\cite{Ru82}, \cite{Ru}). Then, from the  Ruelle formula for the derivative of the pressure (\cite{Ru}), we obtain
$$\frac{\partial P(\psi_{T, s} + v \psi_{T, s})}{\partial v}\big|_{v = 0} = \int \psi_{T, s} \ d\mu_{T, s}. $$
Thus the integral $\int\psi_{T, s} \ d\mu_{T, s}$ and from above also the entropy $h(\mu_{T, s})$, both depend real-analytically on $s$. So from a), the dimension $\delta_{T, s} = \frac{h(\mu_{T, s})}{\chi(\mu_{T, s})}$  depends real-analytically on the parameter $s$ in the interval $\mathcal F(T)$.  

\end{proof}

Now we prove that the dimension of the fractal set $J_{T, \om}$ is independent of $\om \in \Sigma_I^+$. 

\begin{thm}\label{indepcx}
In the above notation, $HD(J_{T, \om})$ does not depend on $\om\in \Sigma_I^+$.
\end{thm}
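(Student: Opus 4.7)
My plan is to identify $HD(J_{T,\om})$, for every $\om\in\Sigma_I^+$, with the Bowen parameter
$$s^*:=\inf\{s\in\mathcal F(T):P(\psi_{T,s})\le 0\},$$
which depends only on the pressure function $s\mapsto P(\psi_{T,s})$ on $\Sigma_I$ and hence not on $\om$. The argument splits into a symbolic unfolding of the fiber system, a Bowen-type covering upper bound, and a lower bound extracted from Theorem \ref{geoms}.

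First I would unfold the fibers recursively. Using $\hat\pi_2(\eta)=T_{\eta|_{-1}^\infty}(\hat\pi_2(\sigma^{-1}\eta))$ for every $\eta\in[\om]$, and noting that $\eta|_{-1}^\infty=\eta_{-1}\om$ while $\sigma^{-1}\eta\in[\eta_{-1}\om]$, I obtain the graph-directed identity
$$J_{T,\om}=\bigcup_{a\in I}T_{a\om}(J_{T,a\om}).$$
Iterating this $n$ times gives the natural level-$n$ cover of $J_{T,\om}$ by the conformally contracted pieces $T_{a_1\om}\circ T_{a_2a_1\om}\circ\cdots\circ T_{a_n\cdots a_1\om}(Y)$, indexed by $(a_1,\ldots,a_n)\in I^n$. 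By the conformality of each fiber map together with the contraction bound (c) and the Bounded Distortion Property (d) of Definition \ref{confSmale}, the diameter of each such piece is comparable, with constants uniform in $n$ and $\om$, to $\exp(S_n\psi_{T,1}(\tau))$ for any $\tau\in\Sigma_I$ whose non-negative coordinates match $\om$ and whose first $n$ negative coordinates are $(a_1,\ldots,a_n)$.

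Next I would derive the upper bound by summing the $s$-th powers of these diameters over $(a_1,\ldots,a_n)\in I^n$ for any $s>s^*$; via \eqref{1_2015_11_04} this sum is uniformly comparable to $\exp(nP(\psi_{T,s}))$, which tends to zero as $n\to\infty$ because $P(\psi_{T,s})<0$. Hence the $s$-dimensional Hausdorff measure of $J_{T,\om}$ vanishes, and letting $s\downarrow s^*$ gives $HD(J_{T,\om})\le s^*$ uniformly in $\om$. For the matching lower bound I would invoke Theorem \ref{geoms}: for every $s\in\mathcal F(T)$ the fiber measure $\nu_{T,s}^\om$ lives on $J_{T,\om}$ with Hausdorff dimension $\delta_{T,s}=h(\mu_{T,s})/\chi(\mu_{T,s})$, already proved independent of $\om$. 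The identity $P(\psi_{T,s})=h(\mu_{T,s})-s\chi(\mu_{T,s})$ combined with $P(\psi_{T,s^*})=0$ yields $\delta_{T,s^*}=s^*$ when $s^*$ lies in the interior of $\mathcal F(T)$; in the boundary case one approaches $s^*$ from within $\mathcal F(T)$ using the real-analyticity of $s\mapsto\delta_{T,s}$ furnished by Theorem \ref{geoms}. Either way, $HD(J_{T,\om})\ge s^*$.

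The hard part will be propagating uniform control through the countable alphabet: one must guarantee that the Bounded Distortion constants remain uniform along arbitrarily long compositions $T_{a_k\cdots a_1\om}$, and that the diameter versus Birkhoff-sum comparability is independent of $\om$. Conditions (c)--(e) of Definition \ref{confSmale} were engineered precisely for this, and the required uniform estimates follow the Mauldin--Urba\'nski framework of \cite{MU-ETDS} and \cite{gdms}. A secondary subtlety is the boundary case for $s^*$, where the pressure may fail to vanish or to be attained; there one relies on the real-analytic dependence of $\delta_{T,s}$ from Theorem \ref{geoms} to pass to the limit $s\uparrow s^*$ within $\mathcal F(T)$ without losing the lower bound.
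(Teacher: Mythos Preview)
Your strategy---identifying $HD(J_{T,\om})$ with the Bowen parameter $s^*$---is exactly the paper's: the paper simply invokes Theorem 7.2 of \cite{MU-ETDS}, which already packages both inequalities into the formula $HD(J_{T,\om})=\inf\{s\ge 0:P(\psi_{T,s})\le 0\}$ valid for \emph{every} $\om$. Your upper bound is a correct direct reproof of one half of that cited result.

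The lower bound, however, has a genuine gap as written. The fiber measures $\nu_{T,s}^\om$ you invoke from Theorem~\ref{geoms} are projections of Rokhlin conditional measures $\bar\mu_{T,s}^\om$, and these are defined only for $\mu_{T,s}^+$-almost every $\om\in\Sigma_I^+$ (see the sentence introducing $\nu_{T,s}^\om$ just before Theorem~\ref{geoms}, and the statement of Theorem~\ref{exactT}). Consequently your argument yields $HD(J_{T,\om})\ge\delta_{T,s}$ only on a set of full $\mu_{T,s}^+$-measure, and after intersecting over countably many $s$ you obtain $HD(J_{T,\om})=s^*$ only for $\om$ in a residual full-measure set, not for \emph{every} $\om\in\Sigma_I^+$. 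Since the theorem asserts independence for all $\om$, this is not sufficient, and there is no obvious semicontinuity of $\om\mapsto HD(J_{T,\om})$ to close the gap.

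To repair this you must either (i) cite the full Bowen formula from \cite{MU-ETDS} as the paper does, whose proof builds conformal-type measures directly on each fiber $J_{T,\om}$ rather than via disintegration, or (ii) supply such a construction yourself: for a fixed $\om$, push forward the one-sided Gibbs measure of $\psi_{T,s}^+$ on the ``past'' shift $\Sigma_I^-$ through $\tau^-\mapsto\hat\pi_2(\tau^-\om)$ and verify the mass-distribution estimate on every fiber. Either route restores the lower bound for all $\om$.
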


\begin{proof}
We have the conformal Smale skew-product $T = (T_\om)_{\om\in \Sigma_I^+}$. From definition it follows that the uniform geometry condition is satisfied and we can apply Theorem 7.2 of \cite{MU-ETDS}. 
Recall also that $J_{T, \om} = \hat\pi_2([\om]) \subset \C$. 
Therefore, for any $\om \in \Sigma_I^+$, a Bowen-type formula holds for the dimension of the set $J_{T, \om}$, namely
\begin{equation}\label{indh}
HD(J_{T, \om}) = \inf\{s\ge 0, P(\psi_{T, s})\le 0\}.
\end{equation}
Therefore, (\ref{indh}) implies that $HD(J_{T, \om})$ does not depend on $\om \in \Sigma_I^+$.

\end{proof}

Since we showed that the dimension of $J_{T, \om}$ does not depend on $\om \in \Sigma_I^+$, denote  by $$\delta_T: = HD(J_{T, \om}), \om \in \Sigma_I^+.$$

\

Next, we will establish a \textbf{Variational Principle for dimension} on  $J_{T, \om}$ in terms of the dimensions of invariant measures supported on  sets $J_{T, \tau}, \tau \in \Sigma_I^+$. 

\

\begin{thm}\label{varprin}

In the above setting, we have  for any $\om \in \Sigma_I^+$, $$\delta_T = HD(J_{T, \om})  = \mathop{\sup}\limits_{s\in \mathcal F(T)} \delta_{T, s} = \sup\{HD(\nu),  \ \nu \text{ invariant measure on} \ J_{T, \tau}, \tau \in \Sigma_I^+\}.$$  
\end{thm}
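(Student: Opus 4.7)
The plan is to use the Bowen-type formula
$$\delta_T = \inf\{s \ge 0 : P(\psi_{T,s}) \le 0\}$$
obtained inside the proof of Theorem \ref{indepcx} as the bridge between $HD(J_{T,\om})$ and the fiber-measure dimensions $\delta_{T,s}$ from Theorem \ref{geoms}. The guiding idea is that at the Bowen critical value $s^* := \delta_T$, the variational principle of Theorem \ref{eqmesi} forces the entropy-to-Lyapunov ratio defining $\delta_{T,s^*}$ to equal $s^*$ itself, so $\delta_T = \delta_{T,s^*}$.

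First I would dispatch the easy inequalities. For every $s \in \mathcal{F}(T)$ and every $\om \in \Sigma_I^+$ the projected equilibrium measure $\nu_{T,s}^\om$ of Theorem \ref{geoms} is supported on $J_{T,\om}$, hence
$$\delta_{T,s} = HD(\nu_{T,s}^\om) \le HD(J_{T,\om}) = \delta_T.$$
Moreover any invariant probability measure $\nu$ with $\supp \nu \subset J_{T,\tau}$ for some $\tau \in \Sigma_I^+$ satisfies $HD(\nu) \le HD(J_{T,\tau}) = \delta_T$ by Theorem \ref{indepcx}. Combining these two remarks with the observation that each $\nu_{T,s}^\om$ is itself such an invariant measure on a fiber yields the chain
$$\sup_{s \in \mathcal{F}(T)} \delta_{T,s} \ \le \ \sup\{HD(\nu) : \nu \text{ invariant}, \ \supp \nu \subset J_{T,\tau},\ \tau \in \Sigma_I^+\} \ \le \ \delta_T.$$

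For the reverse inequality I would exploit regularity of the Bowen formula. The map $s \mapsto P(\psi_{T,s})$ is continuous, convex and strictly decreasing on $\mathcal{F}(T)$, because $\psi_{T,s}$ is linear in $s$ with strictly negative coefficient function $\log|T'_{\cdot|_0^\infty}(\hat\pi_2\cdot)|$. Assuming the system is \emph{regular}, i.e.\ that $s^* := \delta_T$ lies in $\mathcal{F}(T)$ and $P(\psi_{T,s^*}) = 0$, the variational principle of Theorem \ref{eqmesi} applied at the equilibrium state $\mu_{T,s^*}$ gives
$$0 = P(\psi_{T,s^*}) = h(\mu_{T,s^*}) + \int_{\Sigma_I} \psi_{T,s^*}\, d\mu_{T,s^*} = h(\mu_{T,s^*}) - s^* \chi(\mu_{T,s^*}),$$
so Theorem \ref{geoms}(a) yields $\delta_{T,s^*} = h(\mu_{T,s^*})/\chi(\mu_{T,s^*}) = s^* = \delta_T$, closing the chain.

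The main obstacle I expect is justifying regularity of Bowen's formula. For countable alphabet systems the pressure can jump from $+\infty$ to a strictly negative value at the left endpoint of $\mathcal{F}(T)$; if $s^*$ coincides with this endpoint, the equilibrium state $\mu_{T,s^*}$ need not exist and the argument above breaks down. In our setting, however, the uniform geometry of the conformal Smale skew-product $T$ together with the strong open set condition of Definition \ref{confSmale} place us in the framework where Theorem 7.2 of \cite{MU-ETDS} applies and furnishes regularity. Failing a direct verification of regularity, one would instead pick a sequence $s_n \downarrow s^*$ in the interior of $\mathcal{F}(T)$, use $P(\psi_{T,s_n}) \le 0$ to infer $\delta_{T,s_n} = h(\mu_{T,s_n})/\chi(\mu_{T,s_n}) \le s_n$, and invoke the real-analytic (hence continuous) dependence of $s \mapsto \delta_{T,s}$ from Theorem \ref{geoms}(b) together with a truncation-to-finite-subalphabet approximation to pass to the limit and still recover $\sup_{s \in \mathcal{F}(T)} \delta_{T,s} = \delta_T$.
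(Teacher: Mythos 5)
Your proposal follows essentially the same route as the paper: decompose into the easy chain $\sup_{s}\delta_{T,s}\le\sup\{HD(\nu)\}\le\delta_T$ and the hard inequality $\delta_T\le\sup_s\delta_{T,s}$, then reduce the latter to the Bowen formula (\ref{indh}) together with the variational identity $P(\psi_{T,s})=h(\mu_{T,s})-s\,\chi(\mu_{T,s})$. Your ``regular case'' computation, where $P(\psi_{T,s^*})=0$ forces $\delta_{T,s^*}=s^*=\delta_T$, is exactly the clean limiting instance of what the paper argues. The paper, rather than assuming $P$ vanishes exactly at $s^*$, runs an $\vp$-version of the same calculation: it picks $s=s(\vp)$ with $P(\psi_{T,s})\le 0$ and $s-\vp<\delta_T\le s$, uses Lipschitz continuity of $s\mapsto P(\psi_{T,s})$ and positivity of $P$ to the left of $\delta_T$ to get $P(\psi_{T,s})>\beta(\vp)$ with $\beta(\vp)\to 0^-$, and then combines this with a uniform lower bound $\chi(\mu_{T,s})>\chi_0>0$ to obtain $\delta_{T,s}>s+\beta(\vp)/\chi_0\to\delta_T$.

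Your fallback for the irregular case, though, has a genuine gap in its stated form. Taking $s_n\downarrow s^*$ with $P(\psi_{T,s_n})\le 0$ only yields $\delta_{T,s_n}\le s_n$, which is the wrong direction: you already have $\delta_{T,s_n}\le\delta_T$ from the easy inequalities, and what is missing is a \emph{lower} bound on $\delta_{T,s_n}$ approaching $\delta_T$. That lower bound can only come from a lower bound on $P(\psi_{T,s_n})$, which is precisely what the paper's Lipschitz argument supplies and your sketch omits. Appealing to the real-analytic dependence of $s\mapsto\delta_{T,s}$ from Theorem \ref{geoms}(b) does not repair this, since that analyticity holds on $\mathcal F(T)$ and $s^*$ may sit on its boundary where no analytic continuation of $\delta_{T,s}$ is available; nor is the proposed truncation-to-finite-subalphabet step developed enough to carry the weight. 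You are right that the boundary case is the delicate point — the paper's own proof also tacitly requires $\delta_T$ to lie in the interior of $\mathcal F(T)$ so that there is some $s'<\delta_T$ in $\mathcal F(T)$ with $P(\psi_{T,s'})>0$ to anchor the Lipschitz estimate — but the mechanism the paper uses to close the argument (the $\beta(\vp)$ bound and the uniform $\chi_0$) is absent from your write-up and should be added.
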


\begin{proof}

We proved in Theorem \ref{indepcx} that the dimension of $J_{T, \om}$ does not depend on $\om \in \Sigma_I^+$, and thus we denoted it above by $\delta_T$. 
  
 Let  a small $\vp>0$; then from (\ref{indh}) there is $s = s(\vp)>0$ so that $P(\psi_{T, s}) \le 0$ and 
\begin{equation}\label{shd}
s-\vp < \delta_T = HD(J_{T, \om}) \le s.
\end{equation}

But  $P(\psi_{T, s'}) > P(\psi_{T, s})$ for any $s'<s$ from $\mathcal F(T)$, and  the pressure function $s \to P(\psi_{T, s})$ is Lipschitz continuous on $\mathcal F(T)$. Hence there exists a number $\beta(\vp)<0$ with $\beta(\vp) \mathop{\to}\limits_{\vp \to 0} 0$, such that for $s = s(\vp)$ found above,  we have $$P(\psi_{T, s}) > \beta(\vp).$$ Therefore $P(\psi_{T, s}) = h(\mu_{T, s}) - s\chi(\mu_{T,  s}) > \beta(\vp), \ \text{hence}, $
$$\delta_{T, s} = \frac{h(\mu_{T, s})}{\chi(\mu_{T, s})} > s  + \frac{\beta(\vp)}{\chi(\mu_{T, s})}.$$
However from (\ref{lyapglob}) and using the properties of $T$, it follows that  there exists a number $\chi_0>0$, such that $\chi(\mu_{T, s}) > \chi_0$ for all $s \in \mathcal F(T)$. Hence from (\ref{shd}), $$HD(J_{T, \om}) \le s \le \delta_{T, s} + \big|\frac{\beta(\vp)}{\chi_0}\big|$$ 
But $\beta(\vp) \mathop{\to}\limits_{\vp \to 0} 0$, hence $\delta_T = HD(J_{T, \om}) \le \mathop{\sup}\limits_{s \in \mathcal F(T)} \delta_{s}$. \

By the equality $P(\psi_{T, s}) = h(\mu_{T, s}) - s\chi(\mu_{T, s}) \le 0$ and (\ref{shd}) we obtain, for $s = s(\vp)$ above,  $$\delta_{T, s} \le s \le HD(J_{T, \om}) + \vp.$$

From the last inequalities and since  $HD(J_{T, \om})$ is independent of $\om \in \Sigma_I^+$ (from Theorem \ref{indepcx}), we obtain,
\begin{equation}\label{hds}
\delta_T = HD(J_{T, \om}) = \sup \{\delta_{T, s}, \ s \in \mathcal F(T)\}.
\end{equation}

But since any measure on $J_{T, \tau}$ has Hausdoff dimension smaller than or equal to $HD(J_{T, \tau}) = \delta_T$, for any $\tau \in \Sigma_I^+$, we obtain the conclusion of the Theorem. 

\end{proof}

\

\end{document}